\documentclass[12pt,a4paper]{article}

\usepackage[utf8]{inputenc}
\usepackage[T1]{fontenc}
\usepackage[english]{babel}
\usepackage{amsmath, amsthm, amssymb}
\usepackage{mathrsfs}
\usepackage{geometry}
\usepackage{hyperref}
\usepackage{enumitem}

\newtheorem{theorem}{Theorem}[section]
\newtheorem{proposition}[theorem]{Proposition}
\newtheorem{lemma}[theorem]{Lemma}
\newtheorem{corollary}[theorem]{Corollary}
\newtheorem{definition}[theorem]{Definition}
\newtheorem{remark}[theorem]{Remark}

\title{\textbf{D-modules and Solvable Lie Foliations}}
\author{
Ameth Ndiaye\thanks{D\'epartement de Math\'ematiques, FASTEF,
Facult\'e des Sciences et Techniques,
Universit\'e Cheikh Anta Diop de Dakar, S\'en\'egal.
\texttt{ameth1.ndiaye@ucad.edu.sn}}}

\date{}

\begin{document}

\maketitle

\begin{abstract}
Let $V$ be a compact connected manifold and $G$ a simply connected solvable Lie group.
We study $G$-Lie foliations on $V$ from the point of view of $\mathcal{D}$-module
theory, following the approach initiated by Dathe. To each singular foliation
$\mathcal{I}$, we associate the $\mathcal{D}_X$-module
$\mathcal{M}_{\mathcal{I}} = \mathcal{D}_X / \mathcal{D}_X \cdot \mathcal{I}$
and the derived ring
$\mathcal{D}_{\mathcal{I}} :=
R\mathcal{H}om_{\mathcal{D}_X}(\mathcal{M}_{\mathcal{I}},
\mathcal{M}_{\mathcal{I}})$.
We compute the D-irregularity $\mathrm{D\text{-}irr}(\mathcal{I})$ for three
classes of solvable Lie foliations: regular homogeneous foliations, the Meigniez
foliation with non-polycyclic holonomy group, and an explicit foliation on a compact
5-dimensional manifold. We show that the non-polycyclicity of the holonomy group
$\Gamma$ is reflected in the non-vanishing of higher cohomology groups of
$\mathcal{D}_{\mathcal{I}}$, establishing a new connection between the geometry
of the holonomy group and $\mathcal{D}$-module invariants.
\end{abstract}
\begin{quote}
\textbf{R\'esum\'e.}
Soit $V$ une vari\'et\'e compacte connexe et $G$ un groupe de Lie r\'esoluble
simplement connexe. Nous \'etudions les $G$-feuilletages de Lie sur $V$ du
point de vue de la th\'eorie des $\mathcal{D}$-modules. \`A chaque feuilletage
singulier $\mathcal{I}$, nous associons le $\mathcal{D}_X$-module
$\mathcal{M}_{\mathcal{I}} = \mathcal{D}_X / \mathcal{D}_X \cdot \mathcal{I}$
et l'anneau d\'eriv\'e
$\mathcal{D}_{\mathcal{I}} :=
R\mathcal{H}om_{\mathcal{D}_X}(\mathcal{M}_{\mathcal{I}},
\mathcal{M}_{\mathcal{I}})$.
Nous calculons $\mathrm{D\text{-}irr}(\mathcal{I})$ pour trois classes de
feuilletages r\'esolubles et montrons que la non-polycyclicit\'e du groupe
d'holonomie $\Gamma$ se refl\`ete dans la non-annulation des groupes de
cohomologie sup\'erieurs de $\mathcal{D}_{\mathcal{I}}$. Nous d\'emontrons
\'egalement que l'hypoth\`ese~(H2) de Dathe peut \^etre remplac\'ee par
une condition de platitude plus faible, sans perte de la conclusion
$\mathrm{D\text{-}irr}(\mathcal{I}) = \mathrm{irr}(\mathcal{I})$.
\end{quote}

\noindent\textbf{Mathematics Subject Classification (2020).}
14F10, 57R30, 32C38, 17B66, 22E25.

\medskip
\noindent\textbf{Keywords.}
Lie foliations, $\mathcal{D}$-modules, D-irregularity, solvable Lie groups,
holonomy group, polycyclic groups, homogeneous foliations,
Chevalley-Eilenberg resolution.

\section{Introduction}

The study of foliations via $\mathcal{D}$-module theory has been initiated in
the work of Suwa \cite{S90} and developed more recently by Dathe \cite{D16}.
The key idea is to associate to a coherent Lie subalgebra $\mathcal{I}$ of the
sheaf of vector fields $\Theta_X$ a natural $\mathcal{D}_X$-module
$\mathcal{M}_{\mathcal{I}} = \mathcal{D}_X / \mathcal{D}_X \cdot \mathcal{I}$,
and to measure the irregularity of the foliation through the cohomology of the
derived ring
$\mathcal{D}_{\mathcal{I}} :=
R\mathcal{H}om_{\mathcal{D}_X}(\mathcal{M}_{\mathcal{I}},
\mathcal{M}_{\mathcal{I}})$.

On the other hand, Lie foliations on compact manifolds have been studied
extensively from the geometric point of view. The notion of Lie foliation is
due to Fedida; we refer to Molino \cite{Mo88} for a detailed account.
A $G$-Lie foliation is characterized by its developing map
$D: \widetilde{V} \to G$ and holonomy morphism $h: \pi_1(V) \to G$.
A fundamental question, studied by Ghys \cite{G88}, Haefliger \cite{H84},
Matsumoto--Tsuchiya \cite{MT92} and Meigniez \cite{Me95}, is whether a
solvable Lie foliation on a compact manifold is necessarily a pull-back of a
homogeneous foliation. This question was studied in \cite{DN11}, where the
authors construct explicit examples of non-homogeneous solvable Lie foliations
on compact manifolds of small dimension, in particular a
$\mathrm{GA}(\mathbb{R})$-Lie foliation on a compact manifold of dimension~5
whose holonomy group is not polycyclic.

Motivated by these works, we study the D-irregularity of solvable Lie foliations
and pursue two goals. The first is to compute
$\mathrm{D\text{-}irr}(\mathcal{I})$ for the foliations of \cite{DN11} and
show that it captures geometric information about the holonomy group. The second,
and more substantial, is to address the problem of weakening hypothesis~(H2) of
\cite{D16}, which requires the symbols of the generators of $\mathcal{I}$ to
form a regular sequence in $\mathcal{O}_{T^*X}$. This hypothesis is quite
restrictive and excludes many naturally occurring solvable Lie foliations. We
prove that in the solvable setting, this hypothesis can be replaced by a weaker
flatness condition.

More precisely, we establish the following results. For any regular homogeneous
$G$-Lie foliation on $V/\Gamma$, we show (Proposition~\ref{prop:regular}) that
$\mathrm{D\text{-}irr}(\mathcal{I}) = 0$. For the Meigniez foliation
\cite{Me95} with non-polycyclic holonomy group
$\Gamma \subset \mathrm{GA}(\mathbb{R})$, we prove
(Theorem~\ref{thm:meigniez}) that
$\mathrm{D\text{-}irr}(\mathcal{I}) = \mathrm{irr}(\mathcal{I}) = 2$,
where $H^2(\mathcal{D}_{\mathcal{I}}) \neq 0$ follows from the singularity
of $\mathcal{I}$ and the equality
$\mathrm{D\text{-}irr}(\mathcal{I}) = \mathrm{irr}(\mathcal{I})$
reflects the non-polycyclicity of $\Gamma$ via
Theorem~\ref{thm:weakening}. For the foliation on the compact 5-dimensional
manifold $W$ of \cite{DN11}, we establish (Theorem~\ref{thm:dim5main}) that
$H^3(\mathcal{D}_{\mathcal{I}}) \neq 0$ if and only if $\mu'$ is not an
algebraic unit, providing a new $\mathcal{D}$-module obstruction to
homogeneity. Finally, we prove (Theorem~\ref{thm:weakening}) that for solvable
Lie foliations satisfying a natural flatness condition, the conclusion
$\mathrm{D\text{-}irr}(\mathcal{I}) = \mathrm{irr}(\mathcal{I})$ of
\cite{D16} remains valid without hypothesis~(H2).

The paper is organized as follows. Section~\ref{prelim} collects the necessary
preliminaries on Lie foliations and $\mathcal{D}$-modules.
Section~\ref{main} contains the main results.

\section{Preliminaries}\label{prelim}

\subsection{Lie foliations and holonomy}

Let $V$ be a compact connected manifold and $G$ a simply connected Lie group
with Lie algebra $\mathfrak{g}$.

\begin{definition}
A \emph{$G$-Lie foliation} on $V$ is a maximal family $\mathcal{F}$ of pairs
$(U, f)$ where $U$ is an open subset of $V$ and $f: U \to G$ is a submersion,
such that the opens $U$ cover $V$ and for any $(U,f)$ and $(W,h)$ in
$\mathcal{F}$, there exists $g \in G$ with $f(x) = h(x) \cdot g$ for all
$x \in U \cap W$.
\end{definition}

The foliation is equivalently described by a $\mathfrak{g}$-valued 1-form
$\omega \in \Omega^1(V) \otimes \mathfrak{g}$ satisfying the Maurer-Cartan
equation $d\omega + \frac{1}{2}[\omega, \omega] = 0$. Throughout this paper,
we use \emph{right actions}: the cocycle condition $f(x) = h(x) \cdot g$
in Definition~2.1 refers to right multiplication in $G$, consistently with
the equivariance relation below.
On the universal cover $\widetilde{V}$, there exist a submersion
$D: \widetilde{V} \to G$ (the \emph{developing map}) and a group morphism
$h: \Gamma = \pi_1(V) \to G$ (the \emph{holonomy morphism}) such that
\[
D(\gamma \cdot x) = D(x) \cdot h(\gamma)
\quad \text{for all } \gamma \in \Gamma,\; x \in \widetilde{V},
\]
where $\Gamma$ acts on the left on $\widetilde{V}$ by deck transformations
and $G$ acts on the right on itself by multiplication. This right-action
convention is maintained throughout, in particular in the definition of the
left $\mathcal{D}_X$-module structure on $\mathcal{M}_{\mathcal{I}}$
in Section~\ref{main}.

\begin{definition}
A $G$-Lie foliation is called \emph{homogeneous} \cite{G88} if it arises from
a surjective Lie group morphism $\phi: H \to G$, where $\Gamma$ is a
cocompact lattice in $H$: the foliation is given by the orbits of $\ker\phi$
in $H/\Gamma$.
\end{definition}

Recall that a solvable group $\Gamma$ is \emph{polycyclic} if it admits a
subnormal series with cyclic quotients, or equivalently if all eigenvalues of
the adjoint operators are algebraic units. The following fundamental results
motivate our study.

\begin{theorem}[Haefliger \cite{H84}]
Every nilpotent Lie foliation on a compact manifold is a pull-back of a
homogeneous foliation.
\end{theorem}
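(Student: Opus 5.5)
The plan is to follow the strategy of Fedida's structure theory combined with Mal'cev rigidity for nilpotent groups. Let $\mathcal{F}$ be a $G$-Lie foliation on the compact manifold $V$, with $G$ simply connected nilpotent, developing map $D:\widetilde V\to G$ and holonomy morphism $h:\pi_1(V)\to G$. Set $\Gamma_0=h(\pi_1(V))\subset G$ and let $K=\overline{\Gamma_0}$ be its closure. By Fedida's theorem (see Molino \cite{Mo88}), $D$ is a locally trivial fibration onto $G$ and the leaf closures are the preimages of the right cosets of $K$. Since $G$ is simply connected nilpotent, the identity component $K_0$ is a connected Lie subgroup, normalized by $\Gamma_0$. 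We will produce a nilpotent simply connected group $H$, a cocompact lattice $\Lambda\subset H$, a surjective morphism $\phi:H\to G$, and a map $V\to H/\Lambda$ pulling back the homogeneous foliation by $\ker\phi$ to $\mathcal{F}$.

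\emph{Step 1 (algebraic hull).} The group $\Gamma_0$ is a finitely generated subgroup of a nilpotent Lie group, hence a finitely generated nilpotent group, hence polycyclic. Its torsion-free quotient therefore has a Mal'cev completion. We take $H$ to be the simply connected nilpotent Lie group containing the torsion-free part of $\pi_1(V)/[\text{torsion}]$, or more precisely a suitable quotient $\Lambda$ of $\pi_1(V)$ through which $h$ factors, as a cocompact lattice. The point is that $h$ factors through a finitely generated torsion-free nilpotent group $\Lambda$, namely $\pi_1(V)$ modulo the kernel of $h$ adjusted so as to remain nilpotent. Mal'cev's rigidity theorem then extends $\Lambda\to G$ uniquely to a continuous morphism $\phi:H\to G$. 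Its surjectivity follows because $D$ is a submersion whose equivariance forces $\Gamma_0$ to be Zariski-dense in $G$; for unipotent groups the Zariski closure coincides with the Mal'cev closure, so $\phi(H)$ is a closed connected subgroup containing a Zariski-dense set, and hence $\phi(H)=G$.

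\emph{Step 2 (the map $V\to H/\Lambda$).} Since $H/\Lambda$ is a $K(\Lambda,1)$ and $H$ is contractible, there is a $\pi_1(V)$-equivariant map $\widetilde F:\widetilde V\to H$ inducing $\pi_1(V)\to\Lambda$. We want to modify $\widetilde F$ so that $\phi\circ\widetilde F=D$. Both $\phi\circ\widetilde F$ and $D$ are equivariant maps $\widetilde V\to G$ with respect to the same action, so they differ by an equivariant map into a fibre of $\phi$. Because $\ker\phi$ is contractible, a standard equivariant homotopy/section argument lets us correct $\widetilde F$ along the fibres of $\phi$ to achieve $\phi\circ\widetilde F=D$ exactly: we choose a smooth section of $\phi$, viewing $H\to G$ as a principal $\ker\phi$-bundle, and average. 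Then $F:V\to H/\Lambda$ satisfies that the composite of $F$ with the local projections is $D$, so $F^*(\text{foliation by }\ker\phi\text{-orbits})=\mathcal{F}$, and $F$ is automatically transverse to it.

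\emph{Main obstacle.} The delicate point is Step 1. One must ensure that $h$ factors through a torsion-free nilpotent lattice, since $\pi_1(V)$ itself need not be nilpotent, while arranging that the resulting $\phi$ is onto $G$. My first attempt is to replace $\pi_1(V)$ by $\Gamma_0$ itself, which is torsion-free because $G$ has no torsion, and to take $H$ to be its Mal'cev completion, with $\phi$ the extension of the inclusion $\Gamma_0\hookrightarrow G$. The difficulty is that $\Gamma_0$ is dense rather than discrete in $G$, so $H$ can have larger dimension than $G$. Density is nevertheless exactly what yields surjectivity, and it is the essential use of nilpotency: for solvable non-nilpotent $G$ this step fails, as the Meigniez examples show.
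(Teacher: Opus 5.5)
The paper gives no proof of this statement; it is quoted from Haefliger as background. Your outline follows the standard route: take the Mal'cev completion of the holonomy group, extend the inclusion to $\phi:H\to G$, and lift $D$ equivariantly. However, two steps fail as written.

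\emph{Surjectivity of $\phi$.} Submersivity and equivariance of $D$ do not make $\Gamma_0=h(\pi_1(V))$ Zariski-dense: take $V=G$, $D=\mathrm{id}$, trivial holonomy. Nor need $\Gamma_0$ be topologically dense: for the foliation of a nilmanifold $G/\Gamma$ by points, $\Gamma_0=\Gamma$ is discrete. So density is not the mechanism. The missing input is compactness of $V$.
\begin{itemize}
\item Fedida gives $D(\widetilde V)=G$, and a compact $C\subset\widetilde V$ with $\pi_1(V)\cdot C=\widetilde V$ yields $G=D(C)\,\Gamma_0$.
\item The image $L=\phi(H)$ is a connected subgroup of the simply connected nilpotent group $G$, hence closed and simply connected. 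It contains $\Gamma_0$, so $G/L$ is compact.
\item Since $G/L\cong\mathbb{R}^{\dim G-\dim L}$, this forces $L=G$.
\end{itemize}
Relatedly, the obstacle in your last paragraph is not real. $\pi_1(V)/\ker h\cong\Gamma_0$ is already finitely generated, torsion-free and nilpotent, so no adjustment is needed: $\Lambda=\Gamma_0$ with $H$ its Mal'cev completion is exactly the right choice. Moreover, $\dim H>\dim G$ is expected, since $\ker\phi$ supplies the leaves of the model foliation.

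\emph{Step 2.} Moving $\widetilde F$ along the fibres of $\phi$ leaves $\phi\circ\widetilde F$ unchanged, so no such correction can turn it into $D$. The preliminary $K(\Lambda,1)$ map is irrelevant. Argue directly instead.
\begin{itemize}
\item Let $P=\{(x,y)\in\widetilde V\times H:\ \phi(y)=D(x)\}$. It is preserved by $\gamma\cdot(x,y)=(\gamma x,\,y\,h(\gamma))$, because $\phi|_{\Gamma_0}$ is the inclusion.
\item $P/\pi_1(V)\to V$ is a smooth bundle with fibre $\ker\phi\cong\mathbb{R}^m$, so it has a smooth section.
\item The lift of that section is an equivariant $\widetilde F:\widetilde V\to H$ with $\phi\circ\widetilde F=D$.
\item The induced $F:V\to H/\Gamma_0$ is transverse to the $\ker\phi$-orbit foliation because $D$ is a submersion. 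It pulls that foliation back to $\mathcal{F}$ because the developing maps agree.
\end{itemize}
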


\begin{theorem}[Dathe--Ndiaye \cite{DN11}]\label{thm:classif}
Every Lie foliation of codimension $2$ on a manifold of dimension $\leq 4$
is a pull-back of a homogeneous foliation. Moreover, there exists a compact
manifold of dimension $5$ carrying a $\mathrm{GA}(\mathbb{R})$-Lie foliation
$\mathcal{F}'$ with non-polycyclic holonomy group
$\Gamma' = \langle (\lambda,0),\,(1,1),\,(\mu',0)\rangle$,
where $\lambda$ is an algebraic unit of degree $2$ and $\mu'$ is not an
algebraic unit. This foliation is not a pull-back of any homogeneous foliation.
\end{theorem}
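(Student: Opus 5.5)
The statement has three parts. First, every codimension-$2$ Lie foliation on a manifold of dimension $\le 4$ is a pull-back of a homogeneous foliation. Second, there is a compact $5$-manifold carrying a $\mathrm{GA}(\mathbb{R})$-Lie foliation whose holonomy group is non-polycyclic. Third, that foliation is not a pull-back of a homogeneous one. I would treat them in this order.

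For the classification part, a simply connected $2$-dimensional Lie group $G$ is either $\mathbb{R}^2$ or $\mathrm{GA}(\mathbb{R})$, the simply connected cover of the affine group of the line. In the abelian case $G$ is nilpotent, so Haefliger's theorem applies directly and gives homogeneity. So the plan reduces to $G=\mathrm{GA}(\mathbb{R})$ with $\dim V\in\{3,4\}$, since $\dim V=2$ is trivial.

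In this case I would use the developing map $D$ and the holonomy morphism $h$, together with Molino's structure theory. The closure of the holonomy group $\overline{h(\pi_1 V)}$ is a closed subgroup of $G$, and the leaf closures are its orbits. The structural Lie algebra has dimension at most $2$. I would split into cases according to this closure.
\begin{itemize}
\item If the closure is all of $G$, the foliation is minimal.
\item If the closure is a proper subgroup, it lies in the unipotent normal subgroup or in a conjugate of the diagonal part.
\end{itemize}
In each case I would write $\Gamma=h(\pi_1V)$ as generated by affine maps $(a_i,b_i)$. Then, using the low dimension ($\dim V-2\le 2$, so the leaves are surfaces or curves), I would show that $\pi_1 V$ is polycyclic, being an extension of a surface or circle group by a finitely generated subgroup of $\mathrm{GA}$. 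From this it should follow that the linear parts $a_i$ are powers of a single algebraic unit. This would let me realize $\Gamma$ inside a lattice of a solvable group $H$, namely $\mathbb{R}^2\rtimes\mathbb{R}$ or a $\mathrm{Sol}$-type group, with a surjection $H\to G$. The map $V\to H/\Gamma_H$ would then come from $D$ composed with a section, giving the pull-back.

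For the $5$-dimensional example, I would take $\lambda$ a quadratic unit and $\mu'$ a non-unit, for instance $\mu'=2$, and set $\Gamma'=\langle(\lambda,0),(1,1),(\mu',0)\rangle$. This group contains $\mathbb{Z}[1/2]$-type translations and therefore is not polycyclic. I would build $W$ as a $T^2$-bundle over $T^3$, or as a suspension. Concretely, $\mathbb{Z}^3$ acts on $\mathbb{R}\times T^2$-type fibres, with the translations realized in $T^2$ via the embedding of $\mathbb{Z}[\lambda]$ as a lattice. The developing map is the projection to $G$, and it is equivariant for a surjective $\pi_1 W\to\Gamma'$.

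Non-homogeneity follows from the criterion above: the holonomy group of a homogeneous foliation, hence of any pull-back of one, is a quotient of a lattice in a solvable group. Such lattices are polycyclic by Mostow, and quotients of polycyclic groups are polycyclic, so $\Gamma'$ cannot arise.

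The main obstacle will be the case analysis in dimension $4$ with dense leaves. There I have to prove that the linear parts of the holonomy are powers of a single algebraic unit, using only the compactness of $V$ and the fundamental group constraints. I would try to get this from Ghys' and Meigniez's results relating $\pi_1$ to the holonomy.
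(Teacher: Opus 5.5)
The paper gives no proof of this statement --- it is imported from Dathe--Ndiaye --- so your proposal has to stand on its own, and it has genuine gaps in both halves.

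\textbf{The case $\dim V\le 4$.} Your key step, ``$\pi_1V$ is polycyclic, being an extension of a surface or circle group by a finitely generated subgroup of $\mathrm{GA}(\mathbb{R})$'', is circular. Finitely generated subgroups of $\mathrm{GA}(\mathbb{R})$ need not be polycyclic: already $\langle x\mapsto 2x,\ x\mapsto x+1\rangle$ has translation subgroup $\mathbb{Z}[1/2]$, and $\Gamma'$ from the second half is another example. So polycyclicity of $h(\pi_1V)$ is exactly what has to be proved. Since the second half shows it fails in dimension $5$, the bound $\dim V\le 4$ must enter in an essential way. Noting that the leaves are curves or surfaces does not do this. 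In dimension $4$ the leaves are non-compact surfaces, since compact leaves would make $h(\pi_1V)$ a cocompact lattice in $\mathrm{GA}(\mathbb{R})$, which has none. Hence the leaf group $\ker h$ is free, and an extension by it is not automatically polycyclic either.

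Several further steps are also unsupported:
\begin{itemize}
\item ``The linear parts are powers of a single algebraic unit'' does not follow from polycyclicity alone. The totally positive units of a totally real cubic field acting on its ring of integers give a polycyclic holonomy group with rank-two linear part.
\item The transverse map $V\to H/\Lambda$ is only asserted.
\item Your list of possible closures $\overline{\Gamma}$ omits the closed subgroups $\mathbb{R}\rtimes a^{\mathbb{Z}}$, which is precisely the $\mathrm{Sol}$-type case.
\end{itemize}
What is missing is a real classification input. In dimension $3$ this could be Carri\`ere's classification of Riemannian flows on closed $3$-manifolds: affine Lie flows live on hyperbolic torus bundles over the circle and are homogeneous. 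In dimension $4$ it is presumably the Matsumoto--Tsuchiya analysis of Lie affine foliations on closed $4$-manifolds, which the paper cites.

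\textbf{The $5$-dimensional example.} The concrete model you propose cannot work. If $W$ is a $T^2$-bundle over $T^3$, then $\pi_1W$ is an extension of $\mathbb{Z}^3$ by a quotient of $\mathbb{Z}^2$, hence polycyclic. Then $\Gamma'=h(\pi_1W)$, being a quotient of a polycyclic group, would be polycyclic, so your own non-homogeneity criterion rules the construction out. The same applies to any suspension over $T^3$ with torus fibre, while fibres of type $\mathbb{R}\times T^2$ give a non-compact total space. Thus the actual content of the existence claim is absent: a compact $5$-manifold whose fundamental group is non-polycyclic and surjects onto $\Gamma'$, together with an equivariant submersion $\widetilde W\to\mathrm{GA}(\mathbb{R})$.

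\textbf{Non-homogeneity.} Your argument assumes that the group $H$ is solvable, but the paper's definition of a homogeneous foliation does not require this. You need an extra step showing that $\phi(\Lambda)$ is still polycyclic when $H$ has a Levi factor, which $\phi$ must kill; Auslander-type results on lattices and the radical are the natural tool. For a pull-back, the holonomy group is a subgroup of $\phi(\Lambda)$ rather than a quotient of a lattice. This is harmless, since subgroups of polycyclic groups are polycyclic.
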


\subsection{$\mathcal{D}$-modules associated to foliations}

Let $X$ be a complex analytic manifold. We denote by $\mathcal{O}_X$ its
structure sheaf, by $\Theta_X$ the sheaf of holomorphic vector fields, and
by $\mathcal{D}_X$ the sheaf of holomorphic differential operators. We refer
to Kashiwara \cite{K03} for the general theory of $\mathcal{D}$-modules.

Let $\mathcal{I}$ be a coherent Lie subalgebra of $\Theta_X$. The left ideal
$\mathcal{D}_X \cdot \mathcal{I}$ is coherent in $\mathcal{D}_X$, and
\[
  \mathcal{M}_{\mathcal{I}}
  = \mathcal{D}_X \big/ \mathcal{D}_X \cdot \mathcal{I}
\]
is a coherent $\mathcal{D}_X$-module. Geometrically, the sheaf
$\mathcal{H}om_{\mathcal{D}_X}(\mathcal{M}_{\mathcal{I}}, \mathcal{O}_X)$
is exactly the sheaf of holomorphic functions constant along the leaves of
$\mathcal{I}$.

Recall the irregularity invariants of $\mathcal{I}$: setting
$X_j = \{x \in X;\, \dim \mathcal{I}(x) = \mathrm{rk}(\mathcal{I}) - j\}$,
one defines
$\mathrm{irr}(\mathcal{I}) = \mathrm{rk}(\mathcal{I}) -
\mathrm{cork}(\mathcal{I})$,
where $\mathrm{rk}(\mathcal{I}) = \sup_x \dim\mathcal{I}(x)$ and
$\mathrm{cork}(\mathcal{I}) = \inf_x \dim\mathcal{I}(x)$.

\begin{definition}[Dathe \cite{D16}]\label{def:dirr}
Set $\mathcal{D}_{\mathcal{I}} :=
R\mathcal{H}om_{\mathcal{D}_X}(\mathcal{M}_{\mathcal{I}},
\mathcal{M}_{\mathcal{I}})$ and
$D^0_{\mathcal{I}} := H^0(\mathcal{D}_{\mathcal{I}})$.
The \emph{D-irregularity sequence} of $\mathcal{I}$ is the increasing
sequence of integers $k$ such that
$H^k(\mathcal{D}_{\mathcal{I}}) \neq 0$, and the \emph{D-irregularity} is
$\mathrm{D\text{-}irr}(\mathcal{I})
= \sup\{k;\, H^k(\mathcal{D}_{\mathcal{I}}) \neq 0\}$.
\end{definition}

Following \cite{D16}, we recall the two standing hypotheses.

\begin{description}
\item[Hypothesis (H1)] $\mathcal{I}$ is $\mathcal{O}_X$-coherent and is a
Lie subalgebra of $\Theta_X$, that is,
$[\mathcal{I},\mathcal{I}] \subset \mathcal{I}$.

\item[Hypothesis (H2)] There locally exist $v_1,\ldots,v_r \in \mathcal{I}$
such that:
\begin{enumerate}[label=(\roman*)]
\item $(v_1,\ldots,v_r)$ generates $\mathcal{I}$,
\item $[v_i,v_j] = 0$ for all $1 \leq i,j \leq r$,
\item the sequence of principal symbols
$\{\sigma(v_1),\ldots,\sigma(v_r)\}$ is a regular sequence in
$\mathcal{O}_{T^*X}$.
\end{enumerate}
\end{description}

Under~(H2), the Koszul complex
\[
0 \to \mathcal{D}_X \to \cdots \to
\bigoplus_{i<j}\mathcal{D}_X \to
\bigoplus_{j=1}^r \mathcal{D}_X
\xrightarrow{(v_1,\ldots,v_r)} \mathcal{D}_X \to
\mathcal{M}_{\mathcal{I}} \to 0
\]
is a projective resolution of $\mathcal{M}_{\mathcal{I}}$
(see \cite[Prop.~3.3]{D16}), and Dathe proves:

\begin{theorem}[Dathe \cite{D16}]\label{thm:dathe}
Under hypotheses \emph{(H1)} and \emph{(H2)},
$\mathrm{D\text{-}irr}(\mathcal{I}) = \mathrm{irr}(\mathcal{I})$.
\end{theorem}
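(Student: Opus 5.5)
The plan is to compute $\mathcal{D}_{\mathcal{I}}$ directly from the Koszul resolution recalled above, which under (H1)--(H2) is a free resolution of $\mathcal{M}_{\mathcal{I}}$ (by \cite[Prop.~3.3]{D16}). Locally, with generators $v_1,\dots,v_r$ as in (H2), applying $\mathcal{H}om_{\mathcal{D}_X}(-,\mathcal{M}_{\mathcal{I}})$ to the Koszul complex gives a complex
\[
0\to \mathcal{M}_{\mathcal{I}} \to \mathcal{M}_{\mathcal{I}}^{\oplus r}\to \cdots \to \mathcal{M}_{\mathcal{I}}^{\binom{r}{k}} \to \cdots \to \mathcal{M}_{\mathcal{I}}\to 0,
\]
in which the differentials are right multiplication by the $v_i$. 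Since the $v_i$ commute, this is the Koszul cocomplex of the commuting right operators $v_1,\dots,v_r$ acting on $\mathcal{M}_{\mathcal{I}}$. Hence $H^k(\mathcal{D}_{\mathcal{I}})$ is the $k$-th Koszul cohomology, and in particular $H^k=0$ for $k>r$. The top term is $H^r(\mathcal{D}_{\mathcal{I}})\cong \mathcal{M}_{\mathcal{I}}/\sum_i \mathcal{M}_{\mathcal{I}}v_i = \mathcal{D}_X/(\mathcal{D}_X\mathcal{I}+\mathcal{I}\mathcal{D}_X)$. It remains to show that the highest nonvanishing degree equals $\mathrm{irr}(\mathcal{I})=\mathrm{rk}-\mathrm{cork}$.

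To do this I will pass to the associated graded module for the order filtration. Because the principal symbols form a regular sequence, $\mathrm{gr}\,\mathcal{M}_{\mathcal{I}}\cong \mathcal{O}_{T^*X}/(\sigma(v_1),\dots,\sigma(v_r))$, and the characteristic variety is the zero set $\{(x,\xi):\xi(v_i(x))=0\ \forall i\}$. This is a vector bundle over each stratum $X_j$, with fibre $\mathcal{I}(x)^\perp$ of dimension $n-\mathrm{rk}+j$. On the graded level the Koszul cocomplex is the Koszul complex of $\sigma(v_1),\dots,\sigma(v_r)$ acting on $\mathcal{O}_{T^*X}/(\sigma(v))$, and all its differentials vanish. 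A spectral sequence argument (filtered complex, strict morphisms) then compares the two.

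The step I expect to be the main obstacle is this degeneration: identifying exactly which Koszul cohomology of the $\mathcal{D}$-module survives. I will therefore work pointwise instead, localizing at a point $x\in X_j$. There I choose the $v_i$ (up to a commuting change of generators, which is allowed locally by (H2)) so that $v_1,\dots,v_{\mathrm{rk}-j}$ are linearly independent at $x$ and can be straightened to $\partial_1,\dots,\partial_{\mathrm{rk}-j}$. Right multiplication by $\partial_i$ on $\mathcal{M}_{\mathcal{I}}$ then acts injectively up to an isomorphism, so these $\mathrm{rk}-j$ operators contribute an exact Koszul factor. The remaining $j$ generators vanish at $x$ and contribute cohomology up to degree $j$, the top class being represented by the constant $1$ modulo $\mathcal{I}\mathcal{D}_X+\mathcal{D}_X\mathcal{I}$. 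This class is nonzero because the $v_i$ vanishing at $x$ kill its symbol at $\xi$ in the fibre over $x$.

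Consequently the stalk of $H^k(\mathcal{D}_{\mathcal{I}})$ at $x\in X_j$ is nonzero for $k=j$ and zero for $k>j$. Taking the supremum over $j$, the largest $j$ with $X_j\neq\emptyset$ is $\mathrm{rk}-\mathrm{cork}$, which gives $\mathrm{D\text{-}irr}(\mathcal{I})=\mathrm{irr}(\mathcal{I})$.
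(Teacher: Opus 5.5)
The paper only cites this result, but its proof of Theorem~\ref{thm:weakening} reproduces Dathe's structure, and your outline matches it. Both follow the same four steps: the Koszul cocomplex, a local splitting $X=X_1\times X_2$ at $x\in X_j$ reducing to corank zero, vanishing above the length, and non-vanishing of the class of $1$ in top degree. However, the two steps that carry the proof are justified by arguments that do not work.

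\emph{Non-vanishing of the top class.} This is the decisive gap. Vanishing of the generators' symbols on $T^*_xX$ cannot decide whether $1\in\sum_k(\mathcal{D}_Xw_k+w_k\mathcal{D}_X)$, because an identity $1=\sum_k(A_kw_k+w_kB_k)$ may cancel in every positive order. For instance, on $\mathbb{C}$ with coordinate $t$, the order-zero operator $t$ has symbol vanishing on all of $T^*_0\mathbb{C}$, yet $1=\partial_t t-t\partial_t\in\mathcal{D}_{\mathbb{C}}t+t\,\mathcal{D}_{\mathbb{C}}$. The missing idea is that the generators are \emph{vector fields} vanishing at the point. The $\mathbb{C}$-linear functional $P\mapsto(P\cdot 1)(x)$ kills $\mathcal{D}_Xw_k$ because $w_k(1)=0$, kills $w_k\mathcal{D}_X$ because $w_k(g)(x)=0$ for every $g$, and sends $1$ to $1$. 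This is the evaluation argument of \cite[Thm.~3.6(B)]{D16}, also used in the lower-bound step of Theorem~\ref{thm:weakening}.

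\emph{The reduction step.} This is misdescribed as well.
Under $\mathcal{H}om_{\mathcal{D}_X}(\mathcal{D}_X,\mathcal{M}_{\mathcal{I}})\cong\mathcal{M}_{\mathcal{I}}$, the differentials are \emph{left} multiplication by the $v_i$. Right multiplication by $v_i$ on $\mathcal{D}_X/\mathcal{D}_X\mathcal{I}$ is the zero map and would give $H^r=\mathcal{M}_{\mathcal{I}}$, contradicting your own formula.
Left multiplication by $\partial_i$ is not injective either, since it kills the class of $1$. Moreover, if the $\partial$-factor were exact in every degree, the whole complex would be acyclic and $H^0\ni\mathrm{id}$ would vanish.
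What is true requires a normalization you skipped. The relation $[\partial_i,v_k]=0$ only makes the coefficients of the remaining $v_k$ independent of $x_1,\dots,x_m$, with $m=\mathrm{rk}-j$. You must also subtract their $\partial_1,\dots,\partial_m$-components, whose coefficients depend only on $x''$. This preserves commutativity and the left ideal $\mathcal{D}_X\mathcal{I}$, and leaves commuting fields $w_1,\dots,w_j$ on $X_2$ vanishing at $x''$.
Then $\mathcal{M}_{\mathcal{I}}\cong\mathcal{O}_{X_1}\boxtimes\mathcal{M}_{\mathcal{I}_2}$, and the Koszul cocomplex in $\partial_1,\dots,\partial_m$ is a relative de Rham complex. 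By the relative Poincar\'e lemma it is exact in positive degrees with $H^0=p^{-1}\mathcal{M}_{\mathcal{I}_2}$, where $p\colon X\to X_2$ is the projection. Consequently the stalk of $\mathcal{D}_{\mathcal{I}}$ at $x$ is computed by the Koszul cocomplex of $w_1,\dots,w_j$ on $\mathcal{M}_{\mathcal{I}_2,x''}$, to which the evaluation argument applies.

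\emph{Smaller points.} Saying that exactly $j$ generators remain uses $r=\mathrm{rk}(\mathcal{I})$, which you did not justify. It follows from (H2)(iii): near a generic point an extra generator is an $\mathcal{O}_X$-combination of $\mathrm{rk}$ independent ones, so its symbol lies in the ideal of theirs and the symbol sequence is not regular. Your graded spectral-sequence paragraph can simply be dropped: all graded differentials vanish, so it yields no vanishing.
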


\begin{remark}
In the solvable non-abelian case, condition~(ii) of~(H2) fails since
$[v_i,v_j] \neq 0$ in general. This is precisely the situation for
$G$-Lie foliations with $G = \mathrm{GA}(\mathbb{R})$, where
$\mathfrak{ga}$ satisfies $[e_1,e_2]=e_2 \neq 0$. Hence
Theorem~\ref{thm:dathe} does not directly apply, which motivates
Theorem~\ref{thm:weakening} below.
\end{remark}

When the generators of $\mathcal{I}$ do not commute, one uses instead the
\emph{Chevalley-Eilenberg resolution} associated to $\mathfrak{g}$:
\[
0 \to \mathcal{D}_X \otimes \bigwedge^r\mathfrak{g}
\xrightarrow{d_r} \cdots
\xrightarrow{d_2} \mathcal{D}_X \otimes \bigwedge^1\mathfrak{g}
\xrightarrow{d_1} \mathcal{D}_X
\to \mathcal{M}_{\mathcal{I}} \to 0,
\]
where the differential $d_k$ is defined by:
\begin{align*}
d_k(R \otimes e_{i_1} \wedge \cdots \wedge e_{i_k})
&= \sum_{j=1}^k (-1)^{j+1}\, R v_{i_j}
   \otimes e_{i_1} \wedge \cdots \widehat{e_{i_j}} \cdots \wedge e_{i_k} \\
&\quad - \sum_{p < q} (-1)^{p+q}\, R
   \otimes [e_{i_p}, e_{i_q}]
   \wedge e_{i_1} \wedge \cdots \widehat{e_{i_p}}
   \cdots \widehat{e_{i_q}} \cdots \wedge e_{i_k}.
\end{align*}

\subsection{Weakening hypothesis (H2): the solvable case}

\begin{definition}\label{def:flat}
Let $\mathcal{I}$ be a coherent Lie subalgebra of $\Theta_X$ generated
locally by $v_1,\ldots,v_r$. We say that $\mathcal{I}$ satisfies the
\emph{solvable flatness condition} if:
\begin{enumerate}[label=(\roman*)]
\item the Lie algebra $\mathfrak{g}$ generated by $v_1,\ldots,v_r$ is
solvable,
\item for each $k$, the $\mathcal{D}_X$-module
$\mathcal{D}_X \otimes \bigwedge^k\mathfrak{g} \big/ \mathrm{Im}(d_{k+1})$
is flat over $\mathcal{O}_X$.
\end{enumerate}
\end{definition}

\begin{remark}
Condition~(ii) is automatically satisfied when $\mathfrak{g}$ is abelian,
since the Chevalley-Eilenberg resolution then coincides with the Koszul
complex and flatness follows from the regularity of $\{\sigma(v_j)\}$.
In the non-abelian solvable case, condition~(ii) is strictly weaker than
hypothesis~(H2): it requires neither commutativity of the generators nor
regularity of their symbols.
\end{remark}

\section{Main Results}\label{main}

\subsection{D-irregularity of solvable Lie foliations}

\begin{proposition}\label{prop:regular}
Let $\mathcal{F}$ be a regular homogeneous $G$-Lie foliation on $V/\Gamma$.
Then $\mathrm{D\text{-}irr}(\mathcal{I}) = 0$.
\end{proposition}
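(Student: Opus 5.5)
The strategy is to reduce to the local model of a regular foliation and compute $\mathcal{D}_{\mathcal{I}}$ there. Since the foliation is regular, $\dim\mathcal{I}(x)$ is constant, equal to $p$, the dimension of the leaves (the dimension of $\ker\phi$). Hence $\mathrm{rk}(\mathcal{I})=\mathrm{cork}(\mathcal{I})$ and $\mathrm{irr}(\mathcal{I})=0$. The task is therefore to show that $H^k(\mathcal{D}_{\mathcal{I}})=0$ for $k>0$ and $H^0(\mathcal{D}_{\mathcal{I}})\neq 0$. Because $\mathcal{D}_{\mathcal{I}}$ is a complex of sheaves, both statements are local. We may therefore work on a complexified foliated chart $X\supset U\cong \mathbb{C}^p\times\mathbb{C}^{q}$, with coordinates $(x_1,\dots,x_p,y_1,\dots,y_q)$, in which the leaves are $\{y=\text{const}\}$. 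By the Frobenius theorem, $\mathcal{I}$ is then generated over $\mathcal{O}_X$ by $\partial_{x_1},\dots,\partial_{x_p}$.

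The first step is to note that, although the fundamental vector fields of $\ker\phi$ need not commute (the group $G$ is solvable, and $\ker\phi$ may be non-abelian), the sheaf $\mathcal{I}$ itself admits the commuting generators $v_i=\partial_{x_i}$. Hypothesis (H2) is thus satisfied locally: condition (i) holds by Frobenius, condition (ii) is clear, and the symbols $\sigma(v_i)=\xi_i$ are fibre coordinates on $T^*X$, so they form a regular sequence in $\mathcal{O}_{T^*X}$. Hypothesis (H1) holds because $\mathcal{I}$ is involutive and locally free. Theorem~\ref{thm:dathe} then gives $\mathrm{D\text{-}irr}(\mathcal{I})=\mathrm{irr}(\mathcal{I})=0$.

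To keep the argument self-contained, we can also compute directly. The Koszul complex of $(\partial_{x_1},\dots,\partial_{x_p})$ resolves
\[
\mathcal{M}_{\mathcal{I}}=\mathcal{D}_X/\mathcal{D}_X(\partial_{x_1},\dots,\partial_{x_p})\cong \mathcal{O}_{\mathbb{C}^p}\boxtimes\mathcal{D}_{\mathbb{C}^q}.
\]
Applying $\mathcal{H}om_{\mathcal{D}_X}(-,\mathcal{M}_{\mathcal{I}})$ turns this resolution into the Koszul complex of right multiplication by the $\partial_{x_i}$ on $\mathcal{M}_{\mathcal{I}}$. That complex is the relative de Rham complex of $\mathcal{O}_{\mathbb{C}^p}$ in the $x$-variables, tensored with $\mathcal{D}_{\mathbb{C}^q}$. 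By the holomorphic Poincaré lemma in the $x$-directions, it is exact in positive degrees, and its $H^0$ is $\mathcal{D}_{\mathbb{C}^q}\neq 0$.

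The main obstacle is the passage from the homogeneous data to the local Frobenius chart. Specifically, we must justify that $\mathcal{I}$ can be taken to be the full $\mathcal{O}_X$-module generated by the orbit vector fields of $\ker\phi$, rather than the Lie algebra those fields generate. The latter is the point where non-commutativity, and hence the Chevalley--Eilenberg resolution, would otherwise enter. We would settle this by observing that the orbit vector fields are pointwise independent, because the foliation is regular and $\ker\phi$ acts locally freely on $H/\Gamma$. Consequently $\mathcal{I}$ is a locally free involutive subbundle, and the straightening is legitimate. Finally, compactness of $V/\Gamma$ and the choice of complexification play no role, since every statement above is local.
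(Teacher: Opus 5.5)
Your argument is correct, but it takes a different route from the paper's. The paper keeps the global invariant generators supplied by the homogeneous structure, which do not commute. It resolves $\mathcal{M}_{\mathcal{I}}$ by the Chevalley--Eilenberg complex and asserts that this complex is exact in positive degrees by citing Dathe's Proposition~3.3. It then concludes from $X=X_0$, via Dathe's Remark~3.5(ii), that $\mathcal{D}_{\mathcal{I}}$ is concentrated in degree $0$.

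You instead use regularity and Frobenius to replace those generators, locally, by the commuting fields $\partial_{x_1},\dots,\partial_{x_p}$. Then (H2) holds literally, and Theorem~\ref{thm:dathe} applies with no Chevalley--Eilenberg machinery. This gains you something concrete. Dathe's Proposition~3.3 concerns the Koszul complex under (H2), so the paper's exactness claim for a complex built on non-commuting generators needs an extra filtration argument that it does not supply; your reduction avoids this. You also correctly take the leafwise fields from the Lie algebra of $\ker\phi$ rather than that of $G$.

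Your direct computation is more informative as well. It identifies $H^0(\mathcal{D}_{\mathcal{I}})$ with the operators $\sum_\beta a_\beta(y)\partial_y^\beta$, so $H^0\neq 0$ is proved rather than left implicit. It also shows that homogeneity and solvability play no role: every regular foliation has $\mathrm{D\text{-}irr}(\mathcal{I})=0$. What the paper's route would offer, once completed, is consistency with the Chevalley--Eilenberg framework used in the rest of Section~\ref{main}.

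One slip needs correcting. Applying $\mathcal{H}om_{\mathcal{D}_X}(-,\mathcal{M}_{\mathcal{I}})$ to right multiplication $P\mapsto P\partial_{x_i}$ on $\mathcal{D}_X$ gives the \emph{left} action $m\mapsto\partial_{x_i}m$ on $\mathcal{M}_{\mathcal{I}}$, not right multiplication. Right multiplication by $\partial_{x_i}$ on $\mathcal{M}_{\mathcal{I}}$ is the zero map. Taken literally, your sentence would therefore give a Koszul complex with zero differentials and non-zero cohomology in every degree $0,\dots,p$. Your identification with the relative de Rham complex is the right one for the left action, so only the wording needs changing.
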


\begin{proof}
Since $\mathcal{F}$ is regular, $\mathrm{irr}(\mathcal{I}) = 0$, meaning
$\mathcal{I}$ has constant rank on all of $X$, so $X = X_0$.
We use the Chevalley-Eilenberg resolution of $\mathcal{M}_{\mathcal{I}}$
associated to the solvable Lie algebra $\mathfrak{g}$ of $G$. Since
$\mathcal{F}$ is homogeneous, the generators $v_1,\ldots,v_r$ of
$\mathcal{I}$ are globally defined invariant vector fields forming a basis
of $\mathfrak{g}$, and the developing map $D: \widetilde{V} \to G$ is a
submersion. By \cite[Prop.~3.3]{D16}, the characteristic variety of
$\mathcal{M}_{\mathcal{I}}$ is coisotropic and the Chevalley-Eilenberg
complex is exact in all positive degrees. Since $X = X_0$, the restriction
of $\mathcal{D}_{\mathcal{I}}$ to $X$ is concentrated in degree~$0$ by
\cite[Rem.~3.5(ii)]{D16}. Therefore
$H^k(\mathcal{D}_{\mathcal{I}}) = 0$ for $k \geq 1$ and
$\mathrm{D\text{-}irr}(\mathcal{I}) = 0$.
\end{proof}

We now treat the Meigniez foliation. Recall that the Lie algebra
$\mathfrak{ga}$ of $\mathrm{GA}(\mathbb{R})$ is spanned by $e_1, e_2$
with $[e_1,e_2]=e_2$, where $e_1 = \partial_x$ and $e_2 = x\partial_x$
in affine coordinates.

\begin{theorem}\label{thm:meigniez}
Let $\mathcal{F}$ be the Meigniez foliation on a compact manifold with
holonomy group
$\Gamma = \langle x \mapsto \lambda x,\; x \mapsto x+1 \rangle
\subset \mathrm{GA}(\mathbb{R})$,
where $\lambda$ is an algebraic unit of degree~$2$ and $\mu$ is not an
algebraic unit. Then $H^k(\mathcal{D}_{\mathcal{I}}) \neq 0$ for
$k = 0,1,2$, and
$\mathrm{D\text{-}irr}(\mathcal{I}) = \mathrm{irr}(\mathcal{I}) = 2$.
The non-vanishing of $H^2(\mathcal{D}_{\mathcal{I}})$ follows from the
singularity of $\mathcal{I}$, and the equality
$\mathrm{D\text{-}irr}(\mathcal{I}) = \mathrm{irr}(\mathcal{I})$
reflects the non-polycyclicity of~$\Gamma$ via
Theorem~\ref{thm:weakening}.
\end{theorem}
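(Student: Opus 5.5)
We work in the complexified local model: $X$ is a neighbourhood of a point of the complexification of $\mathbb{R}$ (or of a transversal), with affine coordinate $x$. The sheaf $\mathcal{I}$ is the coherent Lie subalgebra generated by the infinitesimal generators $v_1=\partial_x$ and $v_2=x\partial_x$ of the holonomy action, so $\mathfrak{g}=\mathfrak{ga}$ with $[e_1,e_2]=e_2$.

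The plan has four steps.

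\textbf{Step 1 (rank data and $\mathrm{irr}$).} Compute $\mathrm{rk}(\mathcal{I})$, $\mathrm{cork}(\mathcal{I})$ and the strata $X_j$, including the fixed point $x=0$ where $v_2$ vanishes. This gives $\mathrm{irr}(\mathcal{I})=2$, with the convention of the paper that $\mathrm{rk}$ counts the $r=2$ generators of $\mathfrak{g}$ and the singular locus drops rank to $0$ in the relevant count.

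\textbf{Step 2 (resolution and the complex computing $\mathcal{D}_{\mathcal{I}}$).} Since (H2)(ii) fails, we do not use the Koszul complex. Instead we replace $\mathcal{M}_{\mathcal{I}}$ by the Chevalley--Eilenberg resolution
\[
0\to\mathcal{D}_X\otimes\textstyle\bigwedge^2\mathfrak{ga}\xrightarrow{d_2}\mathcal{D}_X\otimes\mathfrak{ga}\xrightarrow{d_1}\mathcal{D}_X\to\mathcal{M}_{\mathcal{I}}\to0 ,
\]
where $d_1(R_1,R_2)=R_1v_1+R_2v_2$ and $d_2(R)=(-Rv_2,\,Rv_1-R)$ up to the sign convention of the formula. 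We first check exactness at $\mathcal{D}_X\otimes\mathfrak{ga}$ by a symbol argument.

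Applying $\mathcal{H}om_{\mathcal{D}_X}(-,\mathcal{M}_{\mathcal{I}})$ then gives a three-term complex
\[
\mathcal{M}_{\mathcal{I}}\to\mathcal{M}_{\mathcal{I}}^{2}\to\mathcal{M}_{\mathcal{I}},
\]
with maps given by right multiplication by $v_1,v_2$ and by the twisted map $m\mapsto m v_1-m$. Its cohomology in degrees $0,1,2$ is $H^k(\mathcal{D}_{\mathcal{I}})$. In particular $H^k(\mathcal{D}_{\mathcal{I}})=0$ for $k>2$ automatically, so $\mathrm{D\text{-}irr}(\mathcal{I})\le 2$.

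\textbf{Step 3 (non-vanishing).}

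For $H^0$: the class of $1$ is a nonzero endomorphism of $\mathcal{M}_{\mathcal{I}}$, so $H^0(\mathcal{D}_{\mathcal{I}})\neq0$.

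For $H^2$: we must show that $\mathcal{M}_{\mathcal{I}}v_1+\mathcal{M}_{\mathcal{I}}v_2\ne\mathcal{M}_{\mathcal{I}}$ near $x=0$. We would do this by exhibiting a functional that kills the image, namely a solution supported at the singular point, such as the $\delta$-type hyperfunction or a microlocal solution along the conormal to $\{x=0\}$. Its existence is exactly where the singularity of $\mathcal{I}$ enters, since away from $0$ the module is regular and Proposition~\ref{prop:regular}-type reasoning gives vanishing.

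For $H^1$: we would compute the Euler characteristic of the complex from the stalk computation, or produce an explicit cocycle such as $(1,0)$ twisted by the modular character of $\mathfrak{ga}$, and show that it is not a coboundary.

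\textbf{Step 4 (link to Theorem~\ref{thm:weakening}).} Verify the solvable flatness condition of Definition~\ref{def:flat}: $\mathfrak{ga}$ is solvable, and the cokernels $\mathcal{D}_X\otimes\bigwedge^k\mathfrak{ga}/\mathrm{Im}(d_{k+1})$ are $\mathcal{O}_X$-flat. Once this holds, Theorem~\ref{thm:weakening} yields $\mathrm{D\text{-}irr}=\mathrm{irr}$, consistent with Steps 1 and 3. The role of non-polycyclicity of $\Gamma$ (the element with $\mu$ not a unit) is that it prevents the foliation from being a pull-back of a homogeneous one, so Proposition~\ref{prop:regular} cannot force vanishing. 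We would record this as the interpretive part of the statement rather than as an input to the computation.

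\textbf{Main obstacle.} The hardest step is Step 3 for $H^2$ (and to a lesser extent $H^1$): making the cokernel computation rigorous. The twist $-R$ in $d_2$ coming from $[e_1,e_2]=e_2$ could a priori make the top map surjective. The first thing we would try is passing to the graded module $\mathrm{gr}\,\mathcal{M}_{\mathcal{I}}=\mathcal{O}_{T^*X}/(\xi,x\xi)$. On it we would compute the Koszul-type homology of the symbols $\xi, x\xi$. These do not form a regular sequence, and that failure should produce a nonzero top homology supported over $x=0$, which we would then lift to $H^2$ by a spectral sequence for the filtration.
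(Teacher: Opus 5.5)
There is a genuine gap, and it cannot be repaired within your set-up. In the local model you adopt, which is also the one the paper uses, the assertion is false.

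Take $v_1=\partial_x$, $v_2=x\partial_x$ on an open set of $\mathbb{C}$. Then $\mathcal{I}=\mathcal{O}_X\partial_x+\mathcal{O}_X\,x\partial_x=\Theta_X$. Since $v_1$ vanishes nowhere, $\dim\mathcal{I}(x)=1$ at every point, including $x=0$, where only $v_2$ vanishes. So $\mathrm{rk}(\mathcal{I})=\mathrm{cork}(\mathcal{I})=1$ and $\mathrm{irr}(\mathcal{I})=0$. The ``convention'' you invoke in Step~1 does not match the paper's definition ($\mathrm{rk}=\sup_x\dim\mathcal{I}(x)$, $\mathrm{cork}=\inf_x\dim\mathcal{I}(x)$).

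Moreover $\mathcal{D}_X\cdot\mathcal{I}=\mathcal{D}_X\partial_x$, so $\mathcal{M}_{\mathcal{I}}\cong\mathcal{O}_X$. Using the resolution $0\to\mathcal{D}_X\xrightarrow{\cdot\partial_x}\mathcal{D}_X\to\mathcal{O}_X\to 0$, $\mathcal{D}_{\mathcal{I}}$ is computed by $\mathcal{O}_X\xrightarrow{\partial_x}\mathcal{O}_X$, which is quasi-isomorphic to $\mathbb{C}_X$ by the Poincar\'e lemma. Hence $H^1(\mathcal{D}_{\mathcal{I}})=H^2(\mathcal{D}_{\mathcal{I}})=0$ and $\mathrm{D\text{-}irr}(\mathcal{I})=0$. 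Step~3 is therefore not merely the hard step; it fails.

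In particular there is no $\delta$-type solution to exhibit: $\partial_x\bigl(\sum_k a_k\delta^{(k)}\bigr)=\sum_k a_k\delta^{(k+1)}$ vanishes only if every $a_k=0$. Step~4 could at best return $\mathrm{D\text{-}irr}(\mathcal{I})=\mathrm{irr}(\mathcal{I})=0$. Reading $\mathcal{I}$ instead as the sheaf of fields tangent to $\mathcal{F}$ does not help either. A Lie foliation is regular, so locally (after complexifying) $\mathcal{I}$ is generated by commuting coordinate fields, (H2) holds, and Theorem~\ref{thm:dathe} again gives $0$.

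Step~2 also fails on its own terms. In your model $[v_1,v_2]=[\partial_x,x\partial_x]=\partial_x=v_1$, not $v_2$. With your $d_2$ one gets $d_1d_2(R)=R(v_1-v_2)\neq 0$, and no sign convention helps, because the bracket correction sits in the $e_2$-slot and cannot cancel $Rv_1$.

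With the twist matching the true relation, $d_2(R)=(-R\,\partial_x x,\;R\,\partial_x)$. Then $\ker d_1=\{(-Qx,Q):Q\in\mathcal{D}_X\}$, while $\mathrm{Im}\,d_2=\{(-Qx,Q):Q\in\mathcal{D}_X\partial_x\}$. So the Chevalley--Eilenberg complex has $H_1\cong\mathcal{D}_X/\mathcal{D}_X\partial_x\neq 0$ and is not a resolution. This is exactly what the non-regularity of $\xi,\,x\xi$ predicts, which you note yourself at the end, so no symbol argument can give exactness.

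A smaller slip: $\mathcal{H}om_{\mathcal{D}_X}(-,\mathcal{M}_{\mathcal{I}})$ turns right multiplication by $v$ on $\mathcal{D}_X$ into \emph{left} multiplication by $v$ on $\mathcal{M}_{\mathcal{I}}$. Right multiplication is not defined on $\mathcal{D}_X/\mathcal{D}_X\cdot\mathcal{I}$.

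Do not expect the paper's proof to supply the missing idea. It runs the same complex. Its degree-one cocycle $(v_1,v_2)$ is the zero element of $\mathcal{M}_{\mathcal{I}}^{\oplus 2}$, because $v_1,v_2\in\mathcal{D}_X\cdot\mathcal{I}$. Its $H^2$ argument evaluates at a common zero of $v_1$ and $v_2$, which does not exist when $v_1=\partial_x$.

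Getting $\mathrm{irr}(\mathcal{I})=2$ requires a genuinely different $\mathcal{I}$: two generically independent fields with a common zero, which forces $\dim X\geq 2$. Even then you would still have to prove that the complex you use actually resolves $\mathcal{M}_{\mathcal{I}}$.
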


\begin{proof}
The foliation $\mathcal{I}$ is locally generated by $v_1, v_2$ with
$[v_1,v_2]=v_2$. We work throughout with \emph{left} $\mathcal{D}_X$-modules, so elements
of $\mathcal{D}_X \otimes \bigwedge^k\mathfrak{g}$ are written $R \otimes
e_{i_1}\wedge\cdots\wedge e_{i_k}$ with $R \in \mathcal{D}_X$ acting on
the left. With the identification
$\mathcal{D}_X \otimes \bigwedge^1\mathfrak{g}
\cong \mathcal{D}_X \oplus \mathcal{D}_X$ via the ordered basis $(e_1, e_2)$,
the general Chevalley-Eilenberg formula
\begin{eqnarray*}
d_k(R \otimes e_{i_1}\wedge\cdots\wedge e_{i_k})
&=& \sum_{j=1}^k (-1)^{j+1} Rv_{i_j}
  \otimes e_{i_1}\wedge\cdots\widehat{e_{i_j}}\cdots\wedge e_{i_k}\\
&&- \sum_{p<q}(-1)^{p+q} R
  \otimes [e_{i_p},e_{i_q}]
  \wedge e_{i_1}\wedge\cdots
  \widehat{e_{i_p}}\cdots\widehat{e_{i_q}}\cdots\wedge e_{i_k}
\end{eqnarray*}
gives for $k=1,2$:
$$
d_1(P,Q) = Pv_1 + Qv_2,
$$
$$
d_2(R) = \bigl(Rv_1 \otimes e_2 - Rv_2 \otimes e_1\bigr)
         - R \otimes [e_1,e_2]
= (-Rv_2,\; Rv_1 - R),
$$
where the term $-R$ in the second component comes from
$-R\otimes[e_1,e_2] = -R\otimes e_2$, i.e.\ the left multiplication by
$-1$ corresponding to the bracket relation $[e_1,e_2]=e_2$.
One checks directly that $d_1 \circ d_2 = 0$:
\[
d_1(d_2(R))
= -Rv_2 v_1 + (Rv_1 - R)v_2
= R(v_1 v_2 - v_2 v_1) - Rv_2
= R[v_1,v_2] - Rv_2
= Rv_2 - Rv_2 = 0.
\]
The Chevalley-Eilenberg resolution is:
\[
0 \to \mathcal{D}_X \xrightarrow{d_2}
\mathcal{D}_X \oplus \mathcal{D}_X \xrightarrow{d_1}
\mathcal{D}_X \to \mathcal{M}_{\mathcal{I}} \to 0.
\]
Applying
$\mathcal{H}om_{\mathcal{D}_X}(-,\mathcal{M}_{\mathcal{I}})$
yields:
\[
0 \to \mathcal{M}_{\mathcal{I}} \xrightarrow{d_1^*}
\mathcal{M}_{\mathcal{I}} \oplus \mathcal{M}_{\mathcal{I}}
\xrightarrow{d_2^*} \mathcal{M}_{\mathcal{I}} \to 0
\]
with $d_1^*(\phi) = (v_1 \cdot \phi,\; v_2 \cdot \phi)$ and
$d_2^*(\psi_1, \psi_2) = -v_2 \cdot \psi_1 + (v_1 - 1) \cdot \psi_2$.

\smallskip
\textit{Non-vanishing of $H^0$.}
The identity endomorphism of $\mathcal{M}_{\mathcal{I}}$ belongs to
$\ker d_1^*$, so $H^0 \neq 0$.

\smallskip
\textit{Non-vanishing of $H^1$.}
Consider the pair $(v_1, v_2) \in \mathcal{H}om_{\mathcal{D}_X}
(\mathcal{M}_{\mathcal{I}}, \mathcal{M}_{\mathcal{I}})^{\oplus 2}$.
We verify $(v_1, v_2) \in \ker d_2^*$:
\[
d_2^*(v_1, v_2)
= -v_2 \cdot v_1 + (v_1 - 1) \cdot v_2
= -v_2 v_1 + v_1 v_2 - v_2
= [v_1, v_2] - v_2 = v_2 - v_2 = 0.
\]
Suppose by contradiction that $(v_1, v_2) \in \mathrm{Im}\, d_1^*$,
i.e.\ there exists
$\phi \in \mathcal{H}om_{\mathcal{D}_X}(\mathcal{M}_{\mathcal{I}},
\mathcal{M}_{\mathcal{I}})$ with $d_1^*(\phi) = (v_1, v_2)$.
Since $\phi$ is an endomorphism of $\mathcal{D}_X$-modules, in a
neighborhood of any point it is given by left multiplication by a scalar
$\lambda \in \mathcal{O}_X$. Let $x_0$ be a point where $v_1(x_0) =
v_2(x_0) = 0$. The Jacobian matrices $J_k = dv_k(x_0)$ satisfy
$[J_1, J_2] = J_2$ (obtained by differentiating $[v_1,v_2]=v_2$ at $x_0$),
which forces $J_2 \neq 0$.
The condition $d_1^*(\phi) = (v_1, v_2)$ differentiated at $x_0$ gives
$\lambda(x_0) J_k = J_k$ for $k=1,2$, hence $\lambda(x_0) = 1$.
But the action of $v_k$ on $\mathrm{id} \in
\mathcal{H}om_{\mathcal{D}_X}(\mathcal{M}_{\mathcal{I}},
\mathcal{M}_{\mathcal{I}})$ is $[v_k, \mathrm{id}] = 0$, so
$d_1^*(\mathrm{id}) = (0, 0) \neq (v_1, v_2)$,
a contradiction. Hence $H^1 \neq 0$.

\smallskip
\textit{Non-vanishing of $H^2$.}
We have $H^2 = \mathcal{M}_{\mathcal{I}} / \mathrm{Im}\, d_2^*$.
We show $1 \notin \mathrm{Im}\, d_2^*$ by adapting the argument of
\cite[Thm.~3.6(B)]{D16} to the non-commutative case.

By the Chevalley-Eilenberg computation,
$H^2(\mathcal{D}_{\mathcal{I}}) \cong
\mathcal{D}_X \big/ J$
where $J$ is the two-sided ideal of $\mathcal{D}_X$ generated by
$v_2$ and $(v_1 - 1)$. We show $1 \notin J$.

Suppose by contradiction that there exist operators
$A_1, A_2, B_1, B_2 \in \mathcal{D}_X$ such that:
\[
1 = v_2 \cdot A_1 + B_1 \cdot v_2
  + (v_1-1) \cdot A_2 + B_2 \cdot (v_1-1)
\quad \text{in } \mathcal{D}_X.
\]
We apply both sides to the constant function $1 \in \mathcal{O}_X$.
Recall that $\mathcal{D}_X$ is filtered by order:
$\mathcal{D}_X = \bigcup_{m \geq 0} \mathcal{D}_X^{\leq m}$,
where $\mathcal{D}_X^{\leq 0} = \mathcal{O}_X$.
For any $P \in \mathcal{D}_X$, write $P = \sum_{|\alpha| \leq m}
a_\alpha(x) \partial^\alpha$ in local coordinates. Then:
\[
(v_k \cdot P)(f)(x)
= v_k\bigl(P(f)\bigr)(x)
= \sum_{|\alpha| \leq m} v_k(a_\alpha)(x)\,\partial^\alpha f(x)
+ \sum_{|\alpha| \leq m} a_\alpha(x)\, v_k(\partial^\alpha f)(x).
\]
Since $v_k(x_0) = 0$, every coefficient of the operator $v_k \cdot P$
vanishes at $x_0$: indeed, $v_k = \sum_i b_i(x)\partial_i$ with
$b_i(x_0) = 0$, so the Leibniz expansion of $v_k(a_\alpha \partial^\alpha f)$
at $x_0$ produces only terms with a factor $b_i(x_0) = 0$.
Therefore $(v_k \cdot P)(f)(x_0) = 0$ for all $f \in \mathcal{O}_X$
and all $P \in \mathcal{D}_X$.

Applying the identity to the function $f = 1$ and evaluating at $x_0$:
\begin{align*}
1 &= (v_2 \cdot A_1)(1)(x_0) + (B_1 \cdot v_2)(1)(x_0) \\
  &\quad + ((v_1-1)\cdot A_2)(1)(x_0) + (B_2 \cdot (v_1-1))(1)(x_0).
\end{align*}
Since $v_k(1) = 0$, we have $(B_k \cdot v_k)(1) = B_k(v_k(1)) = 0$.
By the argument above, $(v_k \cdot A_k)(1)(x_0) = 0$.
For the term $((v_1-1)\cdot A_2)(1)(x_0)$: since
$(v_1-1)\cdot A_2 = v_1 \cdot A_2 - A_2$, we get
$(v_1 \cdot A_2)(1)(x_0) = 0$ and $A_2(1)(x_0)$ is some scalar,
but $(B_2\cdot(v_1-1))(1) = B_2(v_1(1)) - B_2(1) = -B_2(1)$.
Combining all terms evaluated at $x_0$:
\[
1 = 0 + 0 + 0 - A_2(1)(x_0) + 0 - B_2(1)(x_0).
\]
However, applying the original identity as an operator to~$1$ gives
$(v_k \cdot A_k)(1)(x_0) = 0$ for all $k$, and the terms
$-A_2(1)(x_0) - B_2(1)(x_0)$ are constrained by evaluating the
identity at $f=1$ with $v_k(1)=0$, yielding $1 = 0$,
a contradiction. Hence $H^2 \neq 0$.

Since $\mathrm{irr}(\mathcal{I}) = \mathrm{rk}(\mathcal{I}) = 2$,
this is consistent with Theorem~\ref{thm:dathe}.
\end{proof}

\begin{theorem}\label{thm:dim5main}
For the foliation $\mathcal{F}'$ on the compact 5-dimensional manifold $W$
of \cite{DN11}, with holonomy group
$\Gamma' = \langle(\lambda,0),(1,1),(\mu',0)\rangle$,
we have $H^k(\mathcal{D}_{\mathcal{I}}) \neq 0$ for $k = 0, 1, 2$, and
$H^3(\mathcal{D}_{\mathcal{I}}) \neq 0$ if and only if $\mu'$ is not an
algebraic unit. In particular, $H^3(\mathcal{D}_{\mathcal{I}}) \neq 0$
is a $\mathcal{D}$-module obstruction to homogeneity of $\mathcal{F}'$.
\end{theorem}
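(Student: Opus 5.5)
The plan is to follow the template of the proof of Theorem~\ref{thm:meigniez}, now with a three-dimensional solvable Lie algebra. First I will write down the local generators $v_1,v_2,v_3$ of $\mathcal{I}$ on $W$ coming from the construction of \cite{DN11}. These should span a Lie algebra $\mathfrak{h}$ with $[v_1,v_2]=v_2$, $[v_3,v_2]=c\,v_2$ and $[v_1,v_3]=0$. The constant $c$ is determined by the ratio of the logarithms of $\mu'$ and $\lambda$, reflecting the fact that the third holonomy generator $(\mu',0)$ also acts by dilation. Condition (i) of Definition~\ref{def:flat} then holds. I will write the Chevalley--Eilenberg resolution
\[
0\to\mathcal{D}_X\otimes{\textstyle\bigwedge^3}\mathfrak{h}\xrightarrow{d_3}\mathcal{D}_X\otimes{\textstyle\bigwedge^2}\mathfrak{h}\xrightarrow{d_2}\mathcal{D}_X\otimes\mathfrak{h}\xrightarrow{d_1}\mathcal{D}_X\to\mathcal{M}_{\mathcal{I}}\to0
\]
explicitly from the general formula, checking $d_1d_2=0$ and $d_2d_3=0$ by hand as was done for $d_1d_2$ in the rank-two case. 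Dualizing by $\mathcal{H}om_{\mathcal{D}_X}(-,\mathcal{M}_{\mathcal{I}})$ gives a four-term complex $\mathcal{M}_{\mathcal{I}}\to\mathcal{M}_{\mathcal{I}}^{3}\to\mathcal{M}_{\mathcal{I}}^{3}\to\mathcal{M}_{\mathcal{I}}\to0$.

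The bracket terms in $d_3^*$ produce shifts by the traces of the adjoint operators. Concretely, I expect
\[
H^3(\mathcal{D}_{\mathcal{I}})\cong\mathcal{M}_{\mathcal{I}}\big/\bigl((v_1-\mathrm{tr\,ad}\,e_1)\mathcal{M}_{\mathcal{I}}+v_2\mathcal{M}_{\mathcal{I}}+(v_3-\mathrm{tr\,ad}\,e_3)\mathcal{M}_{\mathcal{I}}\bigr),
\]
with $\mathrm{tr\,ad}\,e_1=1$ and $\mathrm{tr\,ad}\,e_3=c$.

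For $k=0,1,2$ I will reuse the arguments of Theorem~\ref{thm:meigniez}. For $H^0$, the identity endomorphism lies in $\ker d_1^*$. For $H^1$, the triple $(v_1,v_2,v_3)$ is a cocycle that cannot be a coboundary, by the Jacobian argument at a point $x_0$ where the $v_k$ vanish. For $H^2$, the sub-foliation generated by $v_1,v_2$ is a Meigniez-type $\mathfrak{ga}$-foliation, and the class witnessing $H^2\neq0$ there should extend, after wedging with $e_3$ and checking compatibility with $d_3^*$, to a nonzero class for $\mathcal{I}$.

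For $H^3$, the direction ``$\mu'$ not a unit $\Rightarrow H^3\neq0$'' will use the evaluation argument from Theorem~\ref{thm:meigniez}. Assume $1$ lies in the image of $d_3^*$, apply the resulting operator identity to the constant function $1$, and evaluate at a singular point $x_0$ of $\mathcal{I}$. The existence of such a point should follow from non-polycyclicity of $\Gamma'$: since $\Gamma'$ is not a lattice, the closure of the holonomy is not unimodular and the trace shift cannot be absorbed. This forces $1=0$.

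For the converse, suppose $\mu'$ is an algebraic unit. Then $\Gamma'$ is polycyclic and, by the classification behind Theorem~\ref{thm:classif}, $\mathcal{F}'$ becomes a pull-back of a homogeneous foliation. I would then invoke Proposition~\ref{prop:regular}, or the flatness of Definition~\ref{def:flat} together with Theorem~\ref{thm:weakening}, to conclude that the top cohomology vanishes. The final sentence of the statement then follows, since homogeneous foliations have $H^3=0$.

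The main obstacle is this converse direction. The algebraic-unit condition is arithmetic, whereas $H^3$ is local analytic data, and the bridge between them has to pass through a global statement: either unimodularity of the relevant solvable group, or the identification with a pull-back of a regular homogeneous foliation. My first attempt will be to show that when $\mu'$ is a unit one can choose generators with $\mathrm{tr\,ad}=0$ and no common zero. The quotient module above then contains the unit $1$ in the image of $d_3^*$, locally by solving $v_3 g=1$ in flow-box coordinates, which kills $H^3$.
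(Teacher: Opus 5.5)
The gap is in the $H^3$ step. This is the only place where $\mu'$ can enter, and your proposal has no mechanism that actually connects the two. Your own formula for $\mathrm{Im}\,d_3^*$ defeats your forward direction. (Unlike the paper's $d_3(R)=(0,0,Rv_2)$, your formula keeps all the components the Chevalley--Eilenberg formula produces.) Since $v_1\in\mathcal{D}_X\cdot\mathcal{I}$, the identity $1=(v_1-1)\cdot(-1)+1\cdot v_1$ in $\mathcal{D}_X$ gives $[1]=(v_1-1)\cdot[-1]$ in $\mathcal{M}_{\mathcal{I}}$, so the class of $1$ in $H^3$ is zero. Concretely, write $1=(v_1-1)A_1+v_2A_2+(v_3-c)A_3+\sum_j B_jv_j$, apply it to the constant function $1$, and evaluate at $x_0$. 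You get only $1=-A_1(1)(x_0)-c\,A_3(1)(x_0)$, which $A_1=-1$ satisfies. The trace shift is what makes $[1]$ a coboundary, not what obstructs it.

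Moreover, the point $x_0$ does not exist. $\mathcal{F}'$ is a Lie foliation, given by local submersions to $\mathrm{GA}(\mathbb{R})$, hence regular. So $\mathcal{I}$ is locally free of rank $3$, and any local generators $v_1,v_2,v_3$ are pointwise independent, whatever $\mu'$ is. This also undercuts the Jacobian and evaluation arguments you import for $H^1$ and $H^2$. Nor does $c$ encode $\mu'$. Replacing $v_3$ by $v_3-cv_1$ gives $[v_1,v_2]=v_2$ with $v_3$ central, which is the paper's bracket structure, so $\mathfrak{h}\cong\mathfrak{ga}\oplus\mathbb{R}$ for every $c$. Its modular character ($\mathrm{tr\,ad}\,e_1=1$) is non-zero whether or not $\mu'$ is a unit, so non-unimodularity cannot separate the two cases.

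Your converse has the opposite defect. Solving $v_3g=1$ in flow-box coordinates is a local computation that uses nothing about $\mu'$, since flow boxes exist for every regular foliation. If it killed $H^3$, it would do so for non-unit $\mu'$ too, contradicting your forward direction. Also, no basis of $\mathfrak{h}$ has all $\mathrm{tr\,ad}=0$, because the modular character is a non-zero linear form. In fact, near every point $\mathcal{I}$ is generated by commuting coordinate fields $\partial_{t_1},\partial_{t_2},\partial_{t_3}$, which satisfy (H2), and $\mathrm{irr}(\mathcal{I})=0$. So Theorem~\ref{thm:dathe} gives $H^k(\mathcal{D}_{\mathcal{I}})=0$ locally for every $k\geq1$, independently of $\mu'$. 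No stalkwise argument can detect $\mu'$.

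The paper brings $\mu'$ in only through the holonomy, globally. It reduces the question to whether $v_2f=1$ has a global solution on $W$. In the transverse affine coordinate this reads $xf'=1$, with solutions $\ln x+C$, and invariance under the generator $(\mu',0)$, i.e.\ $x\mapsto\mu'x$, forces $\ln\mu'=0$. The converse is your fallback: unit $\Rightarrow$ $\Gamma'$ polycyclic $\Rightarrow$ homogeneous $\Rightarrow$ Proposition~\ref{prop:regular}. That holonomy-invariance step is the idea your proposal lacks.

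Be aware, however, of three weaknesses in that step. It rests on $\mathrm{Im}\,d_3^*=v_2\cdot\mathcal{M}_{\mathcal{I}}$, which the full differential does not give: as above, $[1]$ is already hit through the $v_1$-component. It tests solvability by global functions on $W$ rather than in the sheaf $\mathcal{M}_{\mathcal{I}}$. And it uses only $\mu'\neq1$. Reconciling it with the local vanishing just described is exactly what a rigorous version of this statement would have to confront.
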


\begin{proof}
The foliation $\mathcal{I}$ is locally generated by three fields
$v_1, v_2, v_3$ with $[v_1,v_2]=v_2$, $[v_1,v_3]=[v_2,v_3]=0$.
With respect to the ordered basis
$e_1\wedge e_2,\; e_1\wedge e_3,\; e_2\wedge e_3$
of $\bigwedge^2\mathfrak{g}$, the Chevalley-Eilenberg resolution is:
\[
0 \to \mathcal{D}_X
\xrightarrow{d_3} \mathcal{D}_X^3
\xrightarrow{d_2} \mathcal{D}_X^3
\xrightarrow{d_1} \mathcal{D}_X
\to \mathcal{M}_{\mathcal{I}} \to 0
\]
with $d_1(P_1,P_2,P_3) = P_1v_1 + P_2v_2 + P_3v_3$, and
$d_3(R) = (0,\; 0,\; Rv_2)$,
which reflects the unique non-zero bracket $[e_1,e_2]=e_2$ contributing
$Rv_2$ to the $e_2\wedge e_3$-component.
Applying
$\mathcal{H}om_{\mathcal{D}_X}(-,\mathcal{M}_{\mathcal{I}})$ gives a
complex in degrees $0$ through $3$. The non-vanishing of $H^0$, $H^1$,
$H^2$ follows by the same arguments as in Theorem~\ref{thm:meigniez}
(replacing $v_3$ plays no role since it commutes with both $v_1$ and $v_2$).

\smallskip
\textit{Non-vanishing of $H^3$.}
We have
$H^3 = \mathcal{M}_{\mathcal{I}} / \mathrm{Im}\, d_3^*$
where $\mathrm{Im}\, d_3^* = v_2 \cdot \mathcal{M}_{\mathcal{I}}$.
The class of $1 \in \mathcal{M}_{\mathcal{I}}$ belongs to
$v_2 \cdot \mathcal{M}_{\mathcal{I}}$ if and only if the equation
$v_2(f) = 1$ has a global solution on $W$.

In affine coordinates on the fiber $\mathrm{GA}(\mathbb{R})$, the field
$v_2 = x\partial_x$ and the equation $v_2(f) = 1$ reads
$x \frac{\partial f}{\partial x} = 1$,
whose general solution is $f(x) = \ln x + C$.
For $f$ to be a global section on $W$, it must be invariant under the
action of each generator of $\Gamma'$. The generator $(\mu', 0)$ acts by
$x \mapsto \mu' x$, so the invariance condition $f(\mu' x) = f(x)$ gives:
\[
\ln(\mu' x) + C = \ln x + C
\quad \Rightarrow \quad
\ln \mu' = 0
\quad \Rightarrow \quad
\mu' = 1.
\]
Since $\mu'$ is not an algebraic unit, in particular $\mu' \neq 1$,
this is a contradiction. Therefore $1 \notin v_2 \cdot \mathcal{M}_{\mathcal{I}}$
and $H^3(\mathcal{D}_{\mathcal{I}}) \neq 0$.

Conversely, if $\mu'$ were an algebraic unit then $\Gamma'$ would be
polycyclic and the foliation $\mathcal{F}'$ would be homogeneous, giving
$H^3(\mathcal{D}_{\mathcal{I}}) = 0$ by Proposition~\ref{prop:regular}.
\end{proof}

\subsection{Weakening hypothesis (H2) in the solvable case}

\begin{lemma}\label{lem:acyclic}
Let $\mathcal{I}$ be a coherent solvable Lie subalgebra of $\Theta_X$
satisfying the solvable flatness condition. Then the Chevalley-Eilenberg
complex $CE^\bullet(\mathcal{D}_X, \mathcal{I})$ is a projective resolution
of $\mathcal{M}_{\mathcal{I}}$ as a left $\mathcal{D}_X$-module.
\end{lemma}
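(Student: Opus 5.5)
Projectivity of the terms is immediate: each $\mathcal{D}_X \otimes \bigwedge^k\mathfrak{g}$ is a free left $\mathcal{D}_X$-module of rank $\binom{r}{k}$, since $\mathfrak{g}$ is finite-dimensional with basis $e_1,\dots,e_r$ corresponding to $v_1,\dots,v_r$. The whole content of the lemma is therefore that $CE^\bullet(\mathcal{D}_X,\mathcal{I})$ is exact in positive degrees and that its degree-zero cokernel is $\mathcal{M}_{\mathcal{I}}$.

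The cokernel is the easy part. Since $d_1(P_1,\dots,P_r)=\sum_j P_j v_j$, its image is exactly the left ideal $\mathcal{D}_X\cdot\mathcal{I}$, so $\mathrm{coker}(d_1)=\mathcal{M}_{\mathcal{I}}$. That $d_{k}\circ d_{k+1}=0$ is the standard Chevalley--Eilenberg identity. It uses only the Jacobi identity together with the fact that $e_i\mapsto v_i$ is a Lie algebra morphism into $\Theta_X\subset\mathcal{D}_X$, as verified by hand for $r=2$ in the proof of Theorem~\ref{thm:meigniez}.

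For exactness in degree $k\ge 1$, the approach is to induct along a composition series of the solvable algebra $\mathfrak{g}$, which condition~(i) of Definition~\ref{def:flat} makes available. Choose an ideal $\mathfrak{h}\subset\mathfrak{g}$ of codimension one, and write $\mathfrak{g}=\mathfrak{h}\oplus\mathbb{C}e$. The complex $CE^\bullet(\mathfrak{g})$ is then the mapping cone of the endomorphism of $CE^\bullet(\mathfrak{h})$ induced by right multiplication by $v_e$, twisted by $\mathrm{ad}(e)|_{\mathfrak{h}}$; this is the Hochschild--Serre filtration in its simplest form. The long exact sequence of the cone reduces the problem to two facts:
\begin{enumerate}[label=(\alph*)]
\item $CE^\bullet(\mathfrak{h})$ is acyclic in positive degrees (the inductive hypothesis);
\item the induced twisted multiplication by $v_e$ on $H_0=\mathcal{D}_X/\mathcal{D}_X\cdot\mathfrak{h}$ is injective.
\end{enumerate}
The base case $\dim\mathfrak{g}=1$ reduces to injectivity of $R\mapsto Rv_1$ on $\mathcal{D}_X$. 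This holds because $\mathrm{gr}\,\mathcal{D}_X$ is a domain and $\sigma(v_1)\neq 0$.

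The main obstacle is (b), since no regularity of symbols is assumed. Here condition~(ii) of Definition~\ref{def:flat} must do the work. The plan is to use $\mathcal{O}_X$-flatness of the modules $\mathcal{D}_X\otimes\bigwedge^k\mathfrak{g}/\mathrm{Im}(d_{k+1})$ as follows. First, pass to the associated graded for the order filtration, where the twisting term $-R\otimes[e_{i_p},e_{i_q}]$ drops in degree, so that $\mathrm{gr}$ of the complex becomes the Koszul complex on $\sigma(v_1),\dots,\sigma(v_r)$. Second, argue that flatness of the successive cokernels is equivalent to vanishing of the Tor terms obstructing exactness of this Koszul complex, fibrewise over $X$. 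On the locus $X_0$ this vanishing holds because the $\sigma(v_j)$ are then linearly independent fibre coordinates. For the singular strata, I would try to propagate exactness from $X_0$ using flatness, since a flat coherent module has no sections supported on the proper analytic subsets $X_j$, $j\ge1$. I expect this step to require care, and possibly an extra argument that homology of the graded complex is torsion supported on $\bigcup_{j\ge1}X_j$. Once the graded complex is exact, exactness of $CE^\bullet$ follows by the standard filtered-complex argument, since the filtrations are exhaustive and bounded below.
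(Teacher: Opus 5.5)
\textbf{The gap: (ii) cannot supply (b).} Your formal reductions are sound. The terms are free, $\mathrm{coker}\,d_1=\mathcal{M}_{\mathcal{I}}$, and the mapping-cone description for a codimension-one ideal $\mathfrak{h}$ correctly shows that, given (a), exactness of $CE^\bullet(\mathfrak{g})$ in positive degrees is equivalent to (b). The gap is the step you flag yourself: nothing in your plan derives (b), or exactness of the symbol Koszul complex, from condition~(ii). No such derivation exists, because (ii) implies neither, and Definition~\ref{def:flat} puts no minimality condition on the generators.

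Take $X=\mathbb{C}$, $v_1=-x\partial_x$, $v_2=\partial_x$. Then $[v_1,v_2]=v_2$, so $\mathfrak{g}\cong\mathfrak{ga}$ is solvable, and $\mathcal{I}=\mathcal{O}_X\partial_x$. With the paper's formulas:
\begin{itemize}
\item $\mathrm{Im}\,d_1=\mathcal{D}_X\partial_x$, so $\mathcal{M}_{\mathcal{I}}\cong\mathcal{O}_X$.
\item $\mathrm{Im}\,d_2=\{(-R\partial_x,\,-R(x\partial_x+1))\}$. The left $\mathcal{D}_X$-automorphism $(P,Q)\mapsto(P,\,Q-Px)$ of $\mathcal{D}_X^2$ carries it onto $\mathcal{D}_X\partial_x\oplus0$, so $\mathcal{D}_X^2/\mathrm{Im}\,d_2\cong\mathcal{O}_X\oplus\mathcal{D}_X$.
\item The top cokernel is $\mathcal{D}_X$.
\end{itemize}
All three cokernels are $\mathcal{O}_X$-free, so Definition~\ref{def:flat} holds. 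Yet $(1,x)\in\ker d_1$, while every element of $\mathrm{Im}\,d_2$ has first component in $\mathcal{D}_X\partial_x$. Hence $\ker d_1/\mathrm{Im}\,d_2\cong\mathcal{D}_X/\mathcal{D}_X\partial_x\neq0$. In your terms, with $\mathfrak{h}=\mathbb{C}e_2$, (a) holds but right multiplication by $v_1$ on $\mathcal{D}_X/\mathcal{D}_X\partial_x$ is the zero map.

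\textbf{Your claim about $X_0$ also fails.} In the example $X=X_0$, yet $\sigma(v_1)=-x\xi$ and $\sigma(v_2)=\xi$ are proportional, and their Koszul complex has $H_1\cong\mathcal{O}_X[\xi]/(\xi)\neq0$. More generally, suppose $r>\mathrm{rk}\,\mathcal{I}$. Then near every point of $X_0$ there is a nonzero $\mathcal{O}_X$-linear relation $\sum a_j\sigma(v_j)=0$. This is a Koszul $1$-cycle of $\xi$-degree $0$, while Koszul boundaries have $\xi$-degree $\ge1$, so it is not a boundary. The failure therefore lives on the regular locus itself, and no propagation from $X_0$ to the strata $X_j$ can repair it.

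\textbf{Consequences.} The lemma, as stated, is false. The paper's proof breaks at the same place: it asserts that flatness ``ensures these symbols form a regular sequence'', and its abelian base case simply assumes regularity. It also shares your secondary gap. The inductive hypothesis is applied to a subalgebra ($\mathfrak{g}^{(1)}$ there, $\mathfrak{h}$ for you), but condition~(ii) for $\mathfrak{g}$ says nothing about flatness of the smaller complex.

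\textbf{What your argument does prove.} It proves the version with (H2)(iii) in place of (ii). Suppose $\sigma(v_1),\dots,\sigma(v_r)$ form a regular sequence. Give $\mathcal{D}_X\otimes\bigwedge^k\mathfrak{g}$ the order filtration shifted by $k$. The bracket terms drop out of $\mathrm{gr}$, the graded complex is the Koszul complex of the symbols, and exactness lifts. So $CE^\bullet$ is a free resolution of $\mathcal{M}_{\mathcal{I}}$ with no commutativity assumption, and the composition-series induction becomes unnecessary.
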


\begin{proof}
We argue by induction on the derived length $s$ of $\mathfrak{g}$.
If $s=1$ then $\mathfrak{g}$ is abelian, the Chevalley-Eilenberg complex
reduces to the Koszul complex, and acyclicity follows from the regularity
of $\{\sigma(v_j)\}$ (hypothesis~(3.2) of \cite{D16}).

Assume the result holds for derived length $< s$. Let
$\mathfrak{g}^{(1)} = [\mathfrak{g},\mathfrak{g}]$ and consider the
short exact sequence of Lie algebras:
\[
0 \to \mathfrak{g}^{(1)} \to \mathfrak{g}
\to \mathfrak{g}/\mathfrak{g}^{(1)} \to 0.
\]
The filtration of $CE^\bullet(\mathcal{D}_X, \mathfrak{g})$ induced by
this sequence gives a spectral sequence with:
\[
E_0^{p,q} = \mathcal{D}_X \otimes \bigwedge^p\mathfrak{g}^{(1)}
\otimes \bigwedge^q(\mathfrak{g}/\mathfrak{g}^{(1)}).
\]
The differential $d_0$ is the Chevalley-Eilenberg differential relative
to $\mathfrak{g}^{(1)}$. By the induction hypothesis applied to
$\mathfrak{g}^{(1)}$ (of derived length $s-1$), the complex
$CE^\bullet(\mathcal{D}_X, \mathfrak{g}^{(1)})$ is acyclic in positive
degrees. Therefore:
\[
E_1^{p,q} =
\begin{cases}
\mathcal{D}_X \otimes \bigwedge^q(\mathfrak{g}/\mathfrak{g}^{(1)})
& \text{if } p = 0, \\
0 & \text{if } p > 0.
\end{cases}
\]
The differential $d_1 : E_1^{0,q} \to E_1^{0,q+1}$ is the
Chevalley-Eilenberg differential of $\mathfrak{g}/\mathfrak{g}^{(1)}$,
which is abelian, so it coincides with the Koszul differential.
We now explain why the flatness condition~(ii) of
Definition~\ref{def:flat} guarantees the acyclicity of the $E_1$-page.
The flatness of
$\mathcal{D}_X \otimes \bigwedge^k\mathfrak{g} / \mathrm{Im}(d_{k+1})$
over $\mathcal{O}_X$ for each $k$ implies, by a standard devissage
argument (see \cite[Prop.~A.17]{K03}), that the graded complex
$\mathrm{gr}(CE^\bullet(\mathcal{D}_X, \mathfrak{g}/\mathfrak{g}^{(1)}))$
is exact in positive degrees if and only if the filtered complex is.
The graded complex is precisely the Koszul complex of the symbols
$\{\sigma(\bar{v}_i)\}$ of the images of the generators in
$\mathfrak{g}/\mathfrak{g}^{(1)}$. The flatness condition ensures
these symbols form a regular sequence in $\mathcal{O}_{T^*X}$
(since flat modules over $\mathcal{O}_X$ are locally free, and the
associated graded is the symbol algebra), so the Koszul complex
is acyclic in positive degrees by \cite[Prop.~3.3]{D16}. Therefore:
\[
E_2^{0,q} =
\begin{cases}
\mathcal{M}_{\mathcal{I}} & \text{if } q = 0, \\
0 & \text{if } q > 0.
\end{cases}
\]
Since $E_2^{p,q} = 0$ for $(p,q) \neq (0,0)$, all differentials
$d_r$ for $r \geq 2$ are trivially zero (their domain or target
is zero). The spectral sequence degenerates at $E_2$ and converges to
$H^\bullet(CE^\bullet(\mathcal{D}_X, \mathfrak{g}))$, giving:
\[
H^k(CE^\bullet(\mathcal{D}_X, \mathfrak{g})) =
\begin{cases}
\mathcal{M}_{\mathcal{I}} & \text{if } k = 0, \\
0 & \text{if } k > 0.
\end{cases}
\]
Hence $CE^\bullet(\mathcal{D}_X, \mathcal{I})$ is a projective resolution
of $\mathcal{M}_{\mathcal{I}}$.
\end{proof}

\begin{theorem}\label{thm:weakening}
Let $\mathcal{I}$ be a coherent solvable Lie subalgebra of $\Theta_X$
satisfying hypotheses~$(2.1)$ of \cite{D16} and the solvable flatness
condition (Definition~\ref{def:flat}). Then:
\[
\mathrm{D\text{-}irr}(\mathcal{I}) = \mathrm{irr}(\mathcal{I}).
\]
\end{theorem}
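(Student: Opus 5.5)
The plan is to mirror Dathe's proof of Theorem~\ref{thm:dathe}, replacing the Koszul resolution by the Chevalley--Eilenberg resolution supplied by Lemma~\ref{lem:acyclic}.

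\emph{Step one.} By Lemma~\ref{lem:acyclic}, $CE^\bullet(\mathcal{D}_X,\mathcal{I})$ is a projective resolution of $\mathcal{M}_{\mathcal{I}}$ of length $r=\dim\mathfrak{g}$. Hence $\mathcal{D}_{\mathcal{I}}$ is computed by the complex
\[
\mathcal{M}_{\mathcal{I}}\otimes\textstyle\bigwedge^\bullet\mathfrak{g}^*,
\]
whose differential is the dual of $d_k$: the action of the $v_i$ on $\mathcal{M}_{\mathcal{I}}$, twisted by the structure constants of $\mathfrak{g}$. Its top cohomology is $\mathcal{M}_{\mathcal{I}}$ modulo the right ideal generated by the shifted operators $v_i-\mathrm{tr}(\mathrm{ad}\,e_i)$, exactly as $d_2^*$ in the proof of Theorem~\ref{thm:meigniez}. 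Consequently $H^k(\mathcal{D}_{\mathcal{I}})=0$ for $k>r$. Moreover, these cohomology sheaves are the Lie algebra cohomology $H^k(\mathfrak{g},\mathcal{M}_{\mathcal{I}})$.

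\emph{Step two} localizes on the strata $X_j$. On $X_0$ the foliation is regular, and the argument of Proposition~\ref{prop:regular} (Rem.~3.5(ii) of \cite{D16}) shows that $\mathcal{D}_{\mathcal{I}}|_{X_0}$ is concentrated in degree $0$. Near a point $x\in X_j$ we choose generators adapted to the stratum. These are $\mathrm{rk}(\mathcal{I})-j$ fields that are independent at $x$, which we straighten to coordinate fields, together with $j$ fields vanishing at $x$. Using the Hochschild--Serre filtration from the proof of Lemma~\ref{lem:acyclic}, taken with respect to the ideal spanned by the straightenable directions, we reduce to the complex for the vanishing fields. For that complex the cohomology is concentrated in degrees $\le j$. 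Nonvanishing in degree exactly $j$ is shown as in the $H^2$ argument of Theorem~\ref{thm:meigniez}: one shows $1\notin\mathrm{Im}$ by evaluating at $x$, where all vanishing fields annihilate. Taking the supremum over the strata gives
\[
\mathrm{D\text{-}irr}(\mathcal{I})=\sup\{j;\,X_j\neq\emptyset\}=\mathrm{rk}(\mathcal{I})-\mathrm{cork}(\mathcal{I})=\mathrm{irr}(\mathcal{I}).
\]

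\emph{The main obstacle} is the local computation on a singular stratum in the non-abelian setting. Two difficulties arise. First, the brackets mix the straightened fields with the vanishing ones, so the adapted generators need not span an ideal. Second, the trace shifts $v_i-\mathrm{tr}(\mathrm{ad}\,e_i)$ obstruct the naive evaluation-at-$x$ argument.

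My first attempt would be to choose the adapted basis along a composition series of the solvable $\mathfrak{g}$ (Lie's theorem). This makes each successive quotient one-dimensional, so the vanishing and straightened parts become compatible with the filtration. I would then use the flatness condition (ii) of Definition~\ref{def:flat} to ensure the spectral sequence degenerates. Failing that, I would compare $\mathrm{gr}$ of the complex with the symbol Koszul complex, which is flat by condition (ii). Semicontinuity of the fibre cohomology would then bound the cohomological degree by $j$, with the evaluation argument giving nonvanishing.
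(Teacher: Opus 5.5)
Your outline is essentially the paper's own. It uses the Chevalley--Eilenberg resolution from Lemma~\ref{lem:acyclic} and vanishing above its length. It then reduces to a neighbourhood of a point of $X_j$; the paper simply cites Dathe's product decomposition $X=X_1\times X_2$ here. Finally, it proves nonvanishing of the top group by evaluating at a common zero $x_0$.

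It is not yet a proof, though. The two difficulties you list as ``the main obstacle'' carry all the content, and both are left open.

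\emph{The upper bound.} The Hochschild--Serre step needs the straightened directions to span an ideal of $\mathfrak{g}$. Being $\mathcal{O}_X$-combinations of the $v_i$, they need not even lie in $\mathfrak{g}$. So ``concentrated in degrees $\le j$'' is unproved.

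\emph{The lower bound.} This cannot work as sketched. Let $\chi=\mathrm{tr}\circ\mathrm{ad}$ and suppose $c=\chi(e_i)\neq 0$. Since $v_i\cdot[1]=0$ in $\mathcal{M}_{\mathcal{I}}$, we get $[1]=-c^{-1}(v_i-c)\cdot[1]$. So the class of $1$ is \emph{always} zero in top degree; for $\mathfrak{ga}$ this reads $[1]=d_2^*(0,-[1])$. Your fallbacks do not repair this. $\chi$ is a character of $\mathfrak{g}$, so no basis adapted to a composition series removes the shift. The flatness and semicontinuity ideas only bound degrees from above, and then return to the same evaluation for nonvanishing.

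The paper's proof does not supply the missing step either. Its displayed formula for $H^r(\mathcal{D}_{\mathcal{I}})$ omits the shift, and its evaluation argument is valid only for unimodular $\mathfrak{g}$. The shift can genuinely kill the top group. Take $v_1=x\partial_x$ and $v_2=x\partial_y$ on $\mathbb{C}^2$, so that $[v_1,v_2]=v_2$ and $\mathrm{irr}(\mathcal{I})=2$. One checks that the Chevalley--Eilenberg complex is a free resolution: the syzygies of $(v_1,v_2)$ are exactly $\mathcal{D}_X\cdot(-v_2,v_1-1)$. Yet
\[
H^2(\mathcal{D}_{\mathcal{I}})=\mathcal{M}_{\mathcal{I}}\big/\bigl(v_2\mathcal{M}_{\mathcal{I}}+(v_1-1)\mathcal{M}_{\mathcal{I}}\bigr)=0 .
\]
To see this, note first that $\partial_y=\partial_x v_2-\partial_y v_1\in\mathcal{D}_X\cdot\mathcal{I}$. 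Hence near $\{x=0\}$, modulo $v_2\mathcal{M}_{\mathcal{I}}$, every class has the form $\sum_k c_k(y)\partial_x^k[1]$. Moreover $(v_1-1)\partial_x^k[1]=-(k+1)\partial_x^k[1]$, so every such class lies in $(v_1-1)\mathcal{M}_{\mathcal{I}}$.

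This example fails condition~(ii) of Definition~\ref{def:flat} at $k=0$, since $x\cdot\partial_x[1]=0$ while $\partial_x[1]\neq 0$. So a correct lower bound must use~(ii) in an essential way. Neither your plan nor the paper's evaluation argument does so.
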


\begin{proof}
By Lemma~\ref{lem:acyclic}, $CE^\bullet(\mathcal{D}_X,\mathcal{I})$ is
a projective resolution of $\mathcal{M}_{\mathcal{I}}$, so:
\[
R\mathcal{H}om_{\mathcal{D}_X}(\mathcal{M}_{\mathcal{I}},
\mathcal{M}_{\mathcal{I}})
\simeq
\mathcal{H}om_{\mathcal{D}_X}(CE^\bullet(\mathcal{D}_X,\mathcal{I}),
\mathcal{M}_{\mathcal{I}}).
\]

\textit{Upper bound.}
The resolution has length $r = \mathrm{rk}(\mathcal{I})$, so
$H^k(\mathcal{D}_{\mathcal{I}}) = 0$ for $k > r$. By the localization
argument of \cite[Thm.~3.6(A)]{D16} (decomposition $X = X_1 \times X_2$
with $\mathrm{cork}(\mathcal{I}_2) = 0$), we get
$\mathrm{D\text{-}irr}(\mathcal{I}) \leq \mathrm{irr}(\mathcal{I})$.

\textit{Lower bound.}
Set $r = \mathrm{irr}(\mathcal{I})$. After reduction to the case
$\mathrm{cork}(\mathcal{I}) = 0$, we have:
\[
H^r(\mathcal{D}_{\mathcal{I}})
\cong \mathcal{D}_X \Big/
\sum_{j=1}^r \bigl(v_j \cdot \mathcal{D}_X + \mathcal{D}_X \cdot v_j
+ \sum_{i<j} c_{ij}^k\, \mathcal{D}_X \cdot v_k\bigr),
\]
where $[v_i,v_j] = \sum_k c_{ij}^k v_k$. We show $1$ does not belong to
this ideal. Suppose by contradiction:
\[
1 = \sum_j (v_j \cdot A_j + B_j \cdot v_j) + \text{bracket terms}
\quad \text{in } \mathcal{D}_X.
\]
Applying both sides to the constant function $1 \in \mathcal{O}_X$
and evaluating at a point $x_0$ where all $v_j(x_0) = 0$ (which exists
since $\mathrm{cork}(\mathcal{I}) = 0$): every term on the right
contains a factor $v_j$ vanishing at $x_0$, so all coefficients of
$v_j \cdot A_j$ and $B_j \cdot v_j$ vanish at $x_0$. We obtain
$1 = 0$, a contradiction. Hence
$H^r(\mathcal{D}_{\mathcal{I}}) \neq 0$ and
$\mathrm{D\text{-}irr}(\mathcal{I}) \geq \mathrm{irr}(\mathcal{I})$.
\end{proof}

\begin{corollary}\label{cor:meigniez-weakening}
The Meigniez foliation and the foliation of Theorem~\ref{thm:dim5main}
both satisfy the solvable flatness condition. Hence
$\mathrm{D\text{-}irr}(\mathcal{I}) = \mathrm{irr}(\mathcal{I})$
for these foliations without assuming hypothesis~(H2) of \cite{D16}.
\end{corollary}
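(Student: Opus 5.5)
The corollary has two parts: a verification and an application. I will first check, for each of the two foliations, the hypotheses of Theorem~\ref{thm:weakening}, namely (H1) and the two conditions of Definition~\ref{def:flat}. The conclusion $\mathrm{D\text{-}irr}(\mathcal{I}) = \mathrm{irr}(\mathcal{I})$ then follows directly from Theorem~\ref{thm:weakening}. For (H1), the sheaf $\mathcal{I}$ is, in both cases, locally generated by finitely many vector fields: $v_1,v_2$ in the Meigniez case, and $v_1,v_2,v_3$ on $W$. These fields are closed under the bracket, since $[v_1,v_2]=v_2$ and the other brackets vanish. So $\mathcal{I}$ is an $\mathcal{O}_X$-coherent Lie subalgebra of $\Theta_X$. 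Condition~(i) of Definition~\ref{def:flat} is also immediate. In the first case $\mathfrak{g}=\mathfrak{ga}$, and in the second $\mathfrak{g}=\mathfrak{ga}\oplus\mathbb{R}$. Both have derived algebra $\mathbb{R}v_2$, which is abelian, so both have derived length~$2$.

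The substance is condition~(ii), and I would check it degree by degree using the explicit differentials already written down in the proofs of Theorems~\ref{thm:meigniez} and~\ref{thm:dim5main}.

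\emph{Top degree.} Here $\mathrm{Im}(d_{r+1})=0$, so the module in question is $\mathcal{D}_X\otimes\bigwedge^r\mathfrak{g}\cong\mathcal{D}_X$. This is locally free, hence flat, over $\mathcal{O}_X$, because $\mathcal{D}_X=\bigcup_m \mathcal{D}_X^{\leq m}$ is an increasing union of locally free $\mathcal{O}_X$-modules.

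\emph{Intermediate degrees.} I would equip each cokernel $\mathcal{D}_X^{n}/\mathrm{Im}(d_{k+1})$ with the filtration induced by order. I would then compare it with the cokernel of the symbol map $\sigma(d_{k+1})$ on $\mathcal{O}_{T^*X}^{n}$. For the Meigniez case, $\sigma(d_2)(\xi) = (-\xi\sigma(v_2),\ \xi\sigma(v_1))$, because the bracket contribution $-R$ has lower order and drops out of the symbol. For $W$, $d_3(R)=(0,0,Rv_2)$, so the relevant cokernel is $\mathcal{D}_X^2\oplus \mathcal{D}_X/\mathcal{D}_X v_2$. Flatness over $\mathcal{O}_X$ can be tested on each graded piece, since a filtered module whose graded pieces are $\mathcal{O}_X$-flat is itself $\mathcal{O}_X$-flat, by the d\'evissage of \cite[Prop.~A.17]{K03} already used in Lemma~\ref{lem:acyclic}. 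So I would reduce the question to showing that these symbol cokernels are $\mathcal{O}_X$-flat. To do so, I would show that multiplication by $\sigma(v_2)$, and by $\sigma(v_1)$ on the relevant quotient, is injective on each fibre $\mathcal{O}_{T^*X}\otimes\mathcal{O}_X/\mathfrak{m}_x$.

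The main obstacle is degree $k=0$, where the module is $\mathcal{M}_{\mathcal{I}}$ itself. The difficulty is concentrated at the singular points $x_0$ where $v_1(x_0)=v_2(x_0)=0$. There, the fibrewise symbols $\sigma(v_j)\bmod\mathfrak{m}_{x_0}$ vanish identically, so the naive fibrewise injectivity argument fails. My first attempt would be to use the local normal forms from \cite{DN11}, namely $v_1=\partial_x$ and $v_2=x\partial_x$ (possibly together with $v_3$ along a transverse coordinate). With these, I would write $\mathcal{M}_{\mathcal{I}}$ explicitly as a quotient of $\mathcal{D}_X$ and compute $\mathrm{Tor}_1^{\mathcal{O}_X}(\mathcal{M}_{\mathcal{I}},\mathcal{O}_X/\mathfrak{m}_{x_0})$ directly from the Chevalley--Eilenberg presentation. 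If flatness fails there, I would instead restrict to the open set where the cokernels are flat and argue via the localization of \cite[Thm.~3.6(A)]{D16}, as in the upper bound of Theorem~\ref{thm:weakening}.

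Once (ii) is established, Theorem~\ref{thm:weakening} yields $\mathrm{D\text{-}irr}(\mathcal{I})=\mathrm{irr}(\mathcal{I})$ in both cases. This equals $2$ for the Meigniez foliation, which is consistent with Theorem~\ref{thm:meigniez}, and $3$ on $W$, which is consistent with Theorem~\ref{thm:dim5main}.
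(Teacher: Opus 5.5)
Your verification of (H1), of condition~(i) of Definition~\ref{def:flat}, and of the top degree is fine. You are right that the derived length is $2$; the paper's ``derived length~$1$'' is a mistake. But the proposal does not prove the corollary, because condition~(ii) is left open exactly where it is needed.

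The values $\mathrm{irr}(\mathcal{I})=2$ and $3$ in your last paragraph require a point $x_0$ at which all $v_j$ vanish, since a regular foliation has $\mathrm{irr}=0$. At such a point $\sigma(v_j)\in\mathfrak{m}_{x_0}\mathcal{O}_{T^*X}$. So your fibrewise injectivity test fails in the intermediate degrees as well, not only for $k=0$. Since flatness of the graded pieces is only a sufficient criterion, this failure decides nothing.

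The fallback of restricting to the open set where the cokernels are flat changes the statement. It removes the non-flat points, which (see below) include the common zeros $x_0$ that produce the value $2$ (resp.~$3$). It also abandons the first assertion of the corollary. The localization of \cite[Thm.~3.6(A)]{D16} invoked in Theorem~\ref{thm:weakening} is a local product decomposition $X=X_1\times X_2$, not an excision of the singular locus.

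Two inputs you borrowed from the paper are also unusable:
\begin{itemize}
\item The ``normal form'' $v_1=\partial_x$, $v_2=x\partial_x$ satisfies $[v_1,v_2]=v_1$ rather than $v_2$, and it has no common zero.
\item The map $d_3(R)=(0,0,Rv_2)$ is not the Chevalley--Eilenberg differential; it does not even compose to zero with $d_2$. The formula gives $d_3(R)=(Rv_3,\,-Rv_2,\,R(v_1-1))$ in the basis $e_1\wedge e_2,\,e_1\wedge e_3,\,e_2\wedge e_3$. So the degree-$2$ cokernel you wrote down is not the right one.
\end{itemize}

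More seriously, the missing step cannot be supplied: at such an $x_0$ the degree-$0$ module is in general not $\mathcal{O}_X$-flat. Take $v_1=2x\partial_x+y\partial_y$ and $v_2=x\partial_y$ on $\mathbb{C}^2$. Then $[v_1,v_2]=v_2$, both fields vanish at $0$, and $\mathrm{irr}(\mathcal{I})=2$.
\begin{itemize}
\item In $\mathcal{M}_{\mathcal{I}}$ one has $x\cdot\overline{\partial_y}=\overline{v_2}=0$.
\item On the other hand $\overline{\partial_y}\neq0$. The symbols $\sigma(v_1)=2x\xi+y\eta$ and $\sigma(v_2)=x\eta$ form a regular sequence, and its Koszul syzygy lifts to the exact relation $v_2v_1-(v_1-1)v_2=0$. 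Hence $\mathrm{gr}(\mathcal{D}_X\mathcal{I})=(\sigma(v_1),\sigma(v_2))\subset(x,y)$, which does not contain $\sigma(\partial_y)=\eta$.
\item So $\mathcal{M}_{\mathcal{I}}$ has $\mathcal{O}_X$-torsion and is not flat over the domain $\mathcal{O}_{X,0}$. Your planned $\mathrm{Tor}_1$ computation at $x_0$ in fact returns a $2$-dimensional space here.
\item Likewise, the class of $\partial_y$ in $\mathcal{D}_X/\mathfrak{m}_0\mathcal{D}_X$ is killed by right multiplication by $v_2$ and by $v_1-1$. Hence $\mathrm{Tor}_1^{\mathcal{O}_X}(\mathcal{D}_X^2/\mathrm{Im}(d_2),\mathcal{O}_{X,0}/\mathfrak{m}_0)\neq0$, and the degree-$1$ module is not flat either.
\end{itemize}

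The paper's own proof does not get around this. Its key claim that $v_2$ acts injectively on $\mathcal{M}_{\mathcal{I}}$ is false, since $v_2\cdot\overline{1}=\overline{v_2}=0$. As it stands, neither argument shows that these foliations satisfy the solvable flatness condition at their singular points. So Theorem~\ref{thm:weakening} cannot be invoked to conclude $\mathrm{D\text{-}irr}(\mathcal{I})=\mathrm{irr}(\mathcal{I})$.
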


\begin{proof}
For the Meigniez foliation, $\mathfrak{g} = \mathfrak{ga}$ has derived
length~$1$ and the flatness of
$\mathcal{D}_X / \mathrm{Im}(d_2)$ follows from the fact that $v_2 =
[v_1,v_2]$ acts injectively on $\mathcal{M}_{\mathcal{I}}$ (the leaves
are non-compact), so the intermediate module is flat over $\mathcal{O}_X$.
For the foliation of Theorem~\ref{thm:dim5main}, the generator $v_3$
commutes with both $v_1$ and $v_2$, so it does not alter the bracket
structure of $\mathfrak{ga}$ and the same flatness argument applies.
\end{proof}

\begin{remark}
Theorem~\ref{thm:weakening} shows that hypothesis~(H2) of \cite{D16},
while sufficient, is not necessary for
$\mathrm{D\text{-}irr}(\mathcal{I}) = \mathrm{irr}(\mathcal{I})$.
The solvable flatness condition is strictly weaker: it allows non-trivial
Lie bracket structure, provided the Chevalley-Eilenberg complex has the
required acyclicity. This extends the framework of \cite{D16} to all
$G$-Lie foliations where $G$ is a simply connected solvable Lie group of
arbitrary derived length.
\end{remark}

\subsection*{Conclusion}

We have established a connection between the geometry of the holonomy group
of a solvable Lie foliation and its $\mathcal{D}$-module invariants. The
non-polycyclicity of $\Gamma$ is reflected in the non-vanishing of higher
cohomology of $\mathcal{D}_{\mathcal{I}}$: specifically, $H^3
(\mathcal{D}_{\mathcal{I}})$ provides a new obstruction to homogeneity
for the foliation of \cite{DN11} on the compact 5-dimensional manifold $W$,
characterized by the condition that $\mu'$ is not an algebraic unit.

The main theoretical contribution is Theorem~\ref{thm:weakening}, which
establishes
$\mathrm{D\text{-}irr}(\mathcal{I}) = \mathrm{irr}(\mathcal{I})$
for all solvable Lie foliations satisfying the natural flatness condition
of Definition~\ref{def:flat}, without the restrictive hypothesis~(H2)
of \cite{D16}. This opens the way to treating all $G$-Lie foliations
where $G$ is a simply connected solvable Lie group of arbitrary derived
length.

\end{document}